\documentclass[12pt,reqno]{amsart} %\documentclass[10pt]{jsarticle}
\usepackage{ascmac}
\usepackage{amsmath, amssymb, amsthm, amscd}
\usepackage[arrow, matrix]{xy}
\usepackage{enumerate}
\usepackage{type1cm}
\usepackage{color}
\usepackage{wrapfig}
\usepackage[pdftex]{graphicx}
\usepackage{fullpage}

\usepackage{comment}
\usepackage{url}
\usepackage[nameinlink, capitalise]{cleveref}

%\theoremstyle{definition}
%\numberwithin{equation}{section}
%\newtheorem{thm}{定理}[section]
%\newtheorem{dfn}[thm]{定義}
%\newtheorem{eg}[thm]{例}
%\newtheorem{prop}[thm]{命題}
%\newtheorem{cor}[thm]{系}
%\newtheorem{lem}[thm]{補題}
%\newtheorem{conj}[thm]{予想}
%\newtheorem{rem}[thm]{注意}
%\newtheorem{btw}[thm]{よりみち}
%\newtheorem{axiom}[thm]{公理}
%\renewcommand{\proofname}{\textbf{証明}}
%\renewcommand{\end{proof}}{□}

\newtheorem{thm}{Theorem}[section]
\newtheorem{lem}[thm]{Lemma}
\newtheorem{cor}[thm]{Corollary}
\newtheorem{prop}[thm]{Proposition}  
\newtheorem{axiom}[thm]{Axiom}

\theoremstyle{remark}
\newtheorem{ack}{Acknowledgments\!\!}

\theoremstyle{definition}
\newtheorem{dfn}[thm]{Definition}
\newtheorem{rem}[thm]{Remark} 

\newtheorem{eg}[thm]{Examples}       
\newtheorem{conj}[thm]{Conjecture}

\newtheorem{def/prop}[thm]{Definition/Proposition}

\newcommand{\ora}[1]{\overrightarrow{#1}}

\newcommand{\pmx}[1]{\begin{pmatrix}#1\end{pmatrix}}
\newcommand{\spmx}[1]{{\small \pmx{#1}}}

\usepackage[pagewise, mathlines]{lineno} %\linenumbers 

\numberwithin{equation}{section}
\usepackage{etoolbox}          %% <- for \cspreto, \csappto and \patchcmd

%% Patch 'normal' math environments:
\newcommand*\linenomathpatch[1]{%
  \cspreto{#1}{\linenomath}%
  \cspreto{#1*}{\linenomath}%
  \csappto{end#1}{\endlinenomath}%
  \csappto{end#1*}{\endlinenomath}%
}
%% Patch AMS math environments:
\newcommand*\linenomathpatchAMS[1]{%
  \cspreto{#1}{\linenomathAMS}%
  \cspreto{#1*}{\linenomathAMS}%
  \csappto{end#1}{\endlinenomath}%
  \csappto{end#1*}{\endlinenomath}%
}
%% Definition of \linenomathAMS depends on whether the mathlines option is provided
\expandafter\ifx\linenomath\linenomathWithnumbers 
 \let\linenomathAMS\linenomathWithnumbers
 %% The following line gets rid of an extra line numbers at the bottom:
\patchcmd\linenomathAMS{\advance\postdisplaypenalty\linenopenalty}{}{}{}
\else
  \let\linenomathAMS\linenomathNonumbers
\fi

\linenomathpatch{equation}
\linenomathpatchAMS{gather}
\linenomathpatchAMS{multline}
\linenomathpatchAMS{align}
\linenomathpatchAMS{alignat}
\linenomathpatchAMS{flalign}
% Disable line numbering during measurement step of multline
\makeatletter
\patchcmd{\mmeasure@}{\measuring@true}{
  \measuring@true
  \ifnum-\linenopenaltypar>\interdisplaylinepenalty
    \advance\interdisplaylinepenalty-\linenopenalty
  \fi
  }{}{}
\makeatother

\newenvironment{nouppercase}{%
  \renewcommand{\uppercasenonmath}[1]{}}{}

\title[On Beloch's curve]{%\vspace{-2cm} %折り紙と3次方程式の解法\\〜折ることで写った点の軌跡について〜\\ 
%Beloch曲線, すなわち実3次方程式の折り紙解法に現れるある実3次曲線の概形について\\
%{\normalsize (
On Beloch's curve that appears when solving real cubics with origami} %,\\ a pr\'ecis in Japanese)}
\author{Manami Niijima} %新島愛美 
%%お茶の水女子大学 理学部 数学科\\
% (Manami Niijima)\\ 
\email{niijima.manami23@gmail.com}
\address{JVCKENWOOD Corporation}%株式会社JVCケンウッド 
%{\normalsize \url{sakura5.2ijima@gmail.com}}}
%\date{\today \ \ \now} %\ \now}

\makeatletter
\@namedef{subjclassname@2020}{
\textup{2020} Mathematics Subject Classification}
\makeatother 

\subjclass[2020]{Primary 51M15, 14P25; 
Secondary 14Q30
} 

%51M15 Geometric constructions in real or complex geometry
%14Q30 Computational real algebraic geometry
%14P25 Topology of real algebraic varieties

\keywords{origami, real cubic equation, geometric construction, real algebraic curve, Axioms 5 and 6 
}

\begin{document}

\begin{nouppercase}
\maketitle
\end{nouppercase}

\begin{abstract}
The Justin--Huzita--Hatori Axiom 6 of origami related to so-called neusis constructions assures the solution of real cubic equations Beloch showed 1936. 
We investigate a certain real cubic curve $F(x,y)=0$, say, \emph{Beloch's curve} that appears in the algorithm and prove that 
its shape is determined by the sign of the Hessian $\mathcal{H}_F=-4(4p+q^2)$ at its uniquely existing singular point $P(p,q)$. 
This viewpoint would shed new light on the relationship between Axioms 5 and 6. 
\end{abstract}

\tableofcontents 

\section{Introduction}

Origami construction is generally defined by 7 Axioms (re-)founded by Justin--Huzita--Hatori in 1980's (cf.~\cite[p.40--45]{Justin1986}, \cite{LangRJ2010}).  
Among other things, Axiom 6 allows so-called neusis constructions, which is impossible in straightedge and compass constructions. 

\begin{axiom}[Justin--Huzita--Hatori Axioms 6]
Given two points $A$ and $B$ and two lines $l_1$ and $l_2$ on a plane, there exists a fold that places $A$ onto $l_1$ and $B$ onto $l_2$ simultaneously. 
In other words, it is possible to construct a certain fold line $l$ such that 
the reflections of points $A$ and $B$ in $l$ 
are placed on the straight lines $l_1$ and $l_2$, respectively.
\end{axiom} 

This Axiom 6 enables us to construct all solutions of any given real cubic equation by using a perfect piece of origami. 
The following method was initially shown by Beloch in 1936 \cite{Beloch1936}, 
based on Lill's enjoyable idea \cite{Lill1867} (see also \cite{Kato1999}, \cite{Hull2011AMM}, \cite{NakaiOno2015}). 

\begin{thm} \label{thm.construction}
Given segments with length $a, b, c \in \mathbb{R}$, every solution of the cubic formula 
\[x^3-ax^2-bx+c=0\ \cdots\ (\ast)\] may be constructed. More precisely,  

{\rm (1)} 
If we make a fold that places {\rm (I)} $A(-1, 0)$ onto $x=1$ and {\rm (II)} $P(b, a+c)$ onto $y=a-c$, 
then the $y$-intercept $r$ of the fold line $l$ is a solution of $(\ast)$. 

{\rm (2)} For any solution $x=r$ of $(\ast)$, a fold that places $A(-1,0)$ onto $A'(1,2r)$ satisfies the conditions {\rm (I)} and {\rm (II)} in {\rm (1)}. 
\end{thm}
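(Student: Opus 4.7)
The plan is to parametrise the folds that satisfy condition (I) by a single real parameter, derive a closed form for the resulting fold line, and then read condition (II) as an algebraic identity in that parameter.

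First I would use that any fold is a reflection, so its fold line is the perpendicular bisector of the segment from any point to its image. Folds sending $A=(-1,0)$ onto the line $x=1$ are therefore in bijection with the images $A'=(1,s)$, $s\in\mathbb{R}$, and a direct computation of the perpendicular bisector of $AA'$ gives the fold line
\[
l:\ 2x+sy=\tfrac{s^{2}}{2},
\]
whose $y$-intercept is $r:=s/2$. Rewriting $l$ in terms of $r$ yields the cleaner form $x+ry=r^{2}$, so condition (I) together with the $y$-intercept $r$ already determines $l$ and the image $A'=(1,2r)$.

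Next I would impose condition (II) by reflecting $P(b,a+c)$ across $l$ via the standard formula, so that the $y$-coordinate of the image is
\[
(a+c)-\frac{2r\bigl(b+r(a+c)-r^{2}\bigr)}{1+r^{2}}.
\]
Setting this equal to $a-c$ and clearing the denominator produces, after the $r^{2}c$ terms cancel on both sides, precisely
\[
r^{3}-ar^{2}-br+c=0,
\]
which is $(\ast)$. This single computation proves both statements: part (1) reads it as an ``only if'' --- any $r$ arising from a fold satisfying (I) and (II) must be a root of $(\ast)$ --- while part (2) reads it as an ``if'': for any root $r$, the unique fold through $(0,r)$ perpendicular to the segment from $(-1,0)$ to $(1,2r)$ realises (I) automatically and realises (II) by the same identity.

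I do not foresee a conceptual obstacle; the only real work is carrying out the reflection computation carefully and noting the degenerate case $r=0$ (equivalently $s=0$, $c=0$), in which $l$ becomes the $y$-axis and the ``$y$-intercept'' is not isolated. This case corresponds to the trivial root $r=0$ of $(\ast)$ and may be handled separately, or absorbed into the general case by assigning the $y$-axis the $y$-intercept $0$.
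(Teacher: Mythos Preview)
Your proposal is correct and follows essentially the same approach as the paper: parametrise the fold line satisfying (I) as $x+ry=r^{2}$ and translate condition (II) into the cubic $r^{3}-ar^{2}-br+c=0$. Your presentation is slightly more economical in that you invoke the closed-form reflection formula and read the resulting identity as a biconditional, dispatching (1) and (2) in a single computation, whereas the paper writes out the midpoint and perpendicularity conditions with an auxiliary unknown and treats the two parts separately; your remark on the degenerate case $r=0$ is also a point the paper leaves implicit.
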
 

\begin{figure}[ht]
\centering
\includegraphics[width = 5cm]{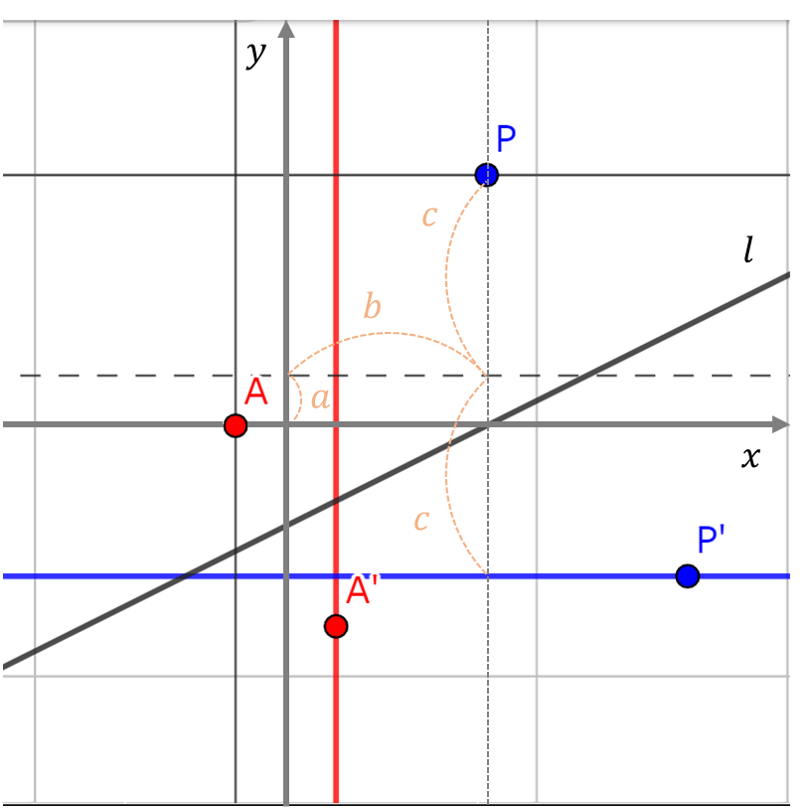}
\caption{Solving cubics by origami}
\label{drawing}
\end{figure}

In other words, \emph{there is a bijective correspondence between all fold lines satisfying the conditions {\rm (I) and (II)} and all real solutions of $(\ast)$.}  
In addition, by considering all fold lines satisfying (I) and parametrizing them by $r$, we may find all solutions. 

In this paper, we consider all fold lines satisfying the condition (I), and investigate the orbit of the points $P'(r)$ $(r\in \mathbb{R})$, each which is the reflection of $P(b,a+c)$ in a fold line. We set $(p, q)=(b, a+c)$. 
Our main results may be summarized into the following theorem.

\begin{thm} \label{thm.main} 
The union $\mathcal{F}$ of the orbit of the points $P'$ and the point $P(p, q)$ is 
a real cubic curve 
\[F(x,y):=2(q-y)^2-(q+y)(q-y)(p-x)-(p-x)^2(p+x)=0,\]
which we call \emph{Beloch's curve}.

The point $P$ is its uniquely existing singular point, 
and the Hessian at $P$ is given by $\mathcal{H}_\mathcal{F}=-4(4p+q^2)$. 
We have the following equivalence on the shape of $\mathcal{F}$ and the parabola $\mathcal{G}: 4x+y^2=0$. 

\begin{tabular}{cl}
$\bullet$ $P$ is on the left side of $\mathcal{G}$ &$\iff$ The orbit of $P'$'s does not pass through $P$\\
&$\iff$ $P$ is an isolated point of $\mathcal{F}$.
\end{tabular}

\begin{tabular}{cl}
$\bullet$ $P$ is on $\mathcal{G}$ & $\iff$ The orbit of $P'$'s passes through $P$ just once\\
&$\iff$ $P$ is a cusp of $\mathcal{F}$.
\end{tabular} 

\begin{tabular}{cl}
$\bullet$ $P$ is on the right side of $\mathcal{G}$ &$\iff$ The orbit of $P'$'s passes through $P$ twice\\ 
&$\iff$ $P$ is a self-intersection point of $\mathcal{F}$. 
\end{tabular} 
\end{thm}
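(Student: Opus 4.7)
The plan proceeds in three stages.

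\emph{Stage 1 (Derivation of $F$).} I would parametrize the fold lines satisfying condition (I) by $r$: since $A(-1,0)$ reflects to $A'(1,2r)$, the fold line $l$ is the perpendicular bisector of $AA'$, with equation $x+ry=r^{2}$. Writing $P'(x,y)$ for the reflection of $P(p,q)$ in $l$, the orthogonality $PP'\perp l$ identifies $r$ with the slope of $PP'$, so $r=(q-y)/(p-x)$ whenever $x\ne p$. Substituting this into the identity $(p-x)(1+r^{2})=2(p+rq-r^{2})$ (which records the signed distance from $P$ to $l$) and clearing denominators yields $F(x,y)=0$. Since $F(p,q)=0$ and the only point of $\{F=0\}$ with $x=p$ is $(p,q)$, we conclude $\mathcal{F}=\{F=0\}$.

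\emph{Stage 2 (Singular point and Hessian).} In local coordinates $u=p-x$, $v=q-y$ one computes
\[ F = u^{3}+uv^{2}-2pu^{2}-2quv+2v^{2}, \]
whose gradient visibly vanishes at $(u,v)=(0,0)$. For uniqueness, $F_{v}=0$ gives $v(u+2)=qu$, while $uF_{u}-F$ simplifies to the clean relation $v^{2}=u^{2}(u-p)$. Squaring the first and combining with the second produces $q^{2}=(u-p)(u+2)^{2}$ for $u\ne 0$; substituting $v=qu/(u+2)$ into $F=0$ instead gives $q^{2}=(u-2p)(u+2)$. Equating these two expressions collapses to $u(u+1-p)=0$, and the branch $u=p-1$ forces $v^{2}=-(p-1)^{2}\le 0$, so only $(u,v)=(0,0)$ survives; the boundary sub-cases where a denominator vanishes ($u=-2$, which forces $q=0$) are ruled out by direct substitution. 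Reading off the quadratic part $-2pu^{2}-2quv+2v^{2}$ then gives $\mathcal{H}_{\mathcal{F}}=(-4p)(4)-(-2q)^{2}=-4(4p+q^{2})$.

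\emph{Stage 3 (Classification and geometric interpretation).} By the standard theory of plane-curve double points, the sign of $\mathcal{H}_{\mathcal{F}}$ governs the real tangent cone at $P$: positive gives a definite quadratic form, hence an isolated acnode; negative gives two distinct real tangent lines, hence a node (self-intersection); and zero gives a perfect-square tangent cone, hence a cusp provided the cubic terms do not vanish along the tangent direction—this is verified by substituting $p=-q^{2}/4$ to write $F=\tfrac{1}{2}(qu-2v)^{2}+u(u^{2}+v^{2})$ and noting that the cubic $u(u^{2}+v^{2})$ does not vanish along $qu=2v$. For the geometric trichotomy, I would note that $P'$ passes through $P$ precisely when $P\in l$, i.e.\ when $r^{2}-qr-p=0$; this equation has $0$, $1$, or $2$ real roots according as the discriminant $q^{2}+4p$ is negative, zero, or positive, i.e.\ as $P$ lies to the left of, on, or to the right of $\mathcal{G}:4x+y^{2}=0$. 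This matches the three cases of the theorem and independently confirms the cusp reading (one branch through $P$) when $\mathcal{H}_{\mathcal{F}}=0$.

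The main obstacle I anticipate is Stage 2: while each manipulation is routine, the elimination of $q^{2}$ requires careful bookkeeping of the case divisions where denominators vanish, and in the cusp case one still needs the explicit form of $F$ after substituting $p=-q^{2}/4$ to rule out a more degenerate singularity than an ordinary cusp.
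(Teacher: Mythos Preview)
Your proposal is correct and in some places more complete than the paper's own argument, but the overall strategy differs from the paper's in notable ways.

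In Stage~1 your elimination of $r$ via the pair (direction, signed distance) is equivalent to the paper's use of the midpoint-on-$l$ and orthogonality equations (their Eq.~(2.1)--(2.2)); the reverse inclusion, which you obtain cleanly from the reversibility of the substitution $r=(q-y)/(p-x)$, is handled in the paper by a longer argument that introduces the image $A'$ of $A$ and reduces to the factorization $(p-s)^2(u-1)((u+1)^2+4r^2)=0$.

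The sharpest divergence is in Stages~2 and~3. For uniqueness of the singular point the paper simply states that $F=F_x=F_y=0$ was solved with Mathematica; your by-hand elimination via $v(u+2)=qu$ and $v^2=u^2(u-p)$, leading to $u(u+1-p)=0$, is a genuine improvement in rigor (and your boundary check $u=-2$ is necessary and correct). For the classification of the singular point, the paper takes a geometric route: it introduces the circle $\mathcal{C}$ of center $P$ and radius $AP$, shows (Propositions~3.2--3.3) that the number of intersections of $\mathcal{C}$ with $x=1$ equals the number of $r$ with $P'=P$, and then for the cusp case invokes further geometric facts (Propositions~3.5--3.6 on whether $AP$ and $A'P'$ meet, forcing $s\ge p$) together with connectedness. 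You instead work entirely with the tangent cone: the sign of $\mathcal{H}_{\mathcal{F}}$ settles the acnode/node cases, and for $\mathcal{H}_{\mathcal{F}}=0$ you verify directly that $F=\tfrac12(qu-2v)^2+u(u^2+v^2)$ has a nonvanishing cubic along the double tangent, yielding an ordinary cusp. Your count of the parameter values $r$ with $P'=P$ via the discriminant of $r^2-qr-p=0$ is also more direct than the paper's circle argument.

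What each approach buys: the paper's geometric scaffolding (the circle $\mathcal{C}$, the segment-intersection criteria) is reused later for the rotation-number result (their Theorem~3.9) and for the study of $\mathcal{F}\cap\mathcal{G}$, so it is not wasted effort. Your purely algebraic route is shorter and self-contained for the theorem at hand, and avoids the computer-algebra appeal, but does not by itself set up those later results.
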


\begin{figure}[h] 
\begin{tabular}{ccc}
\begin{minipage}[t]{0.3\linewidth}
\centering
\includegraphics[width = 45mm]{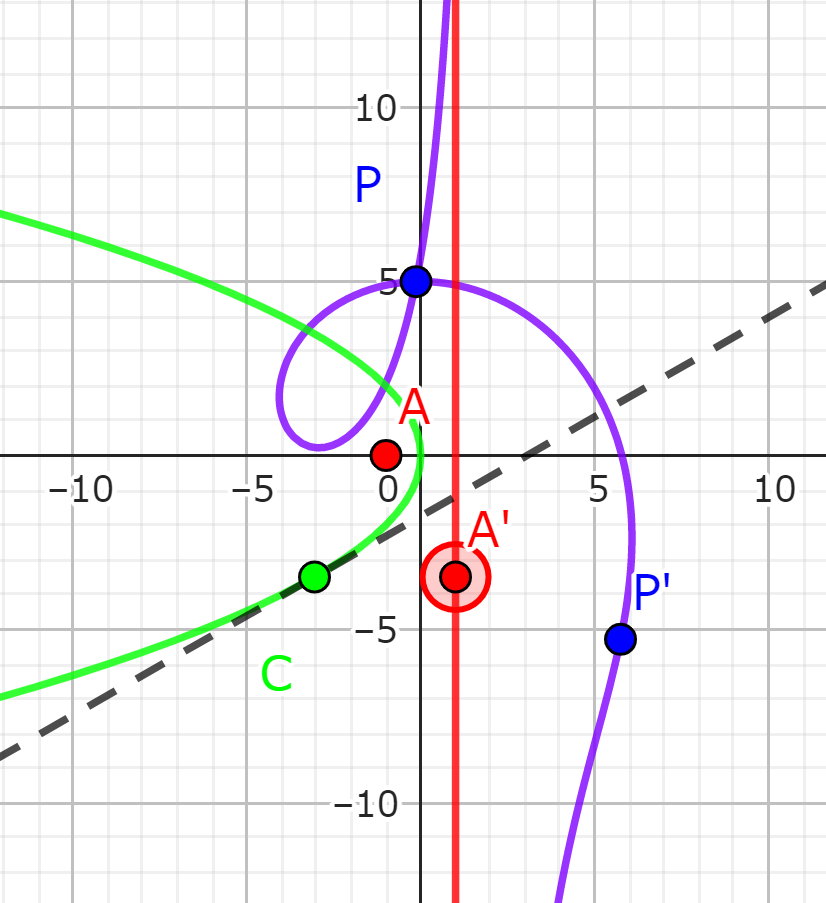}
\end{minipage}
&
\begin{minipage}[t]{0.3\linewidth}
\centering
\includegraphics[width = 45mm]{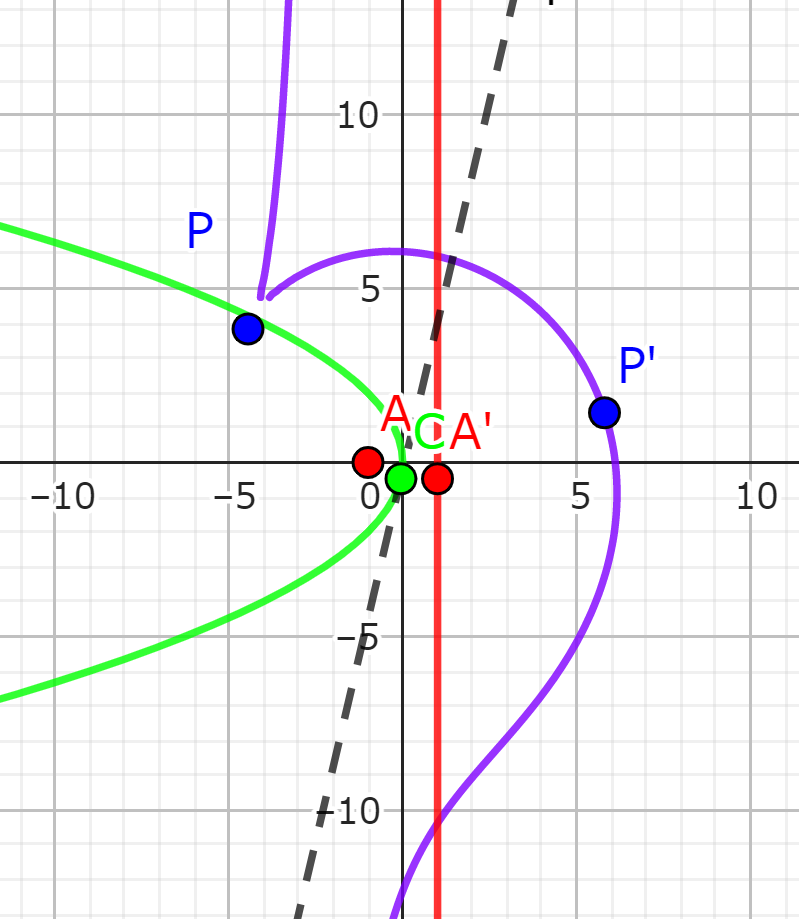}
\end{minipage}
&
\begin{minipage}[t]{0.3\linewidth}
\centering
\includegraphics[width = 45mm]{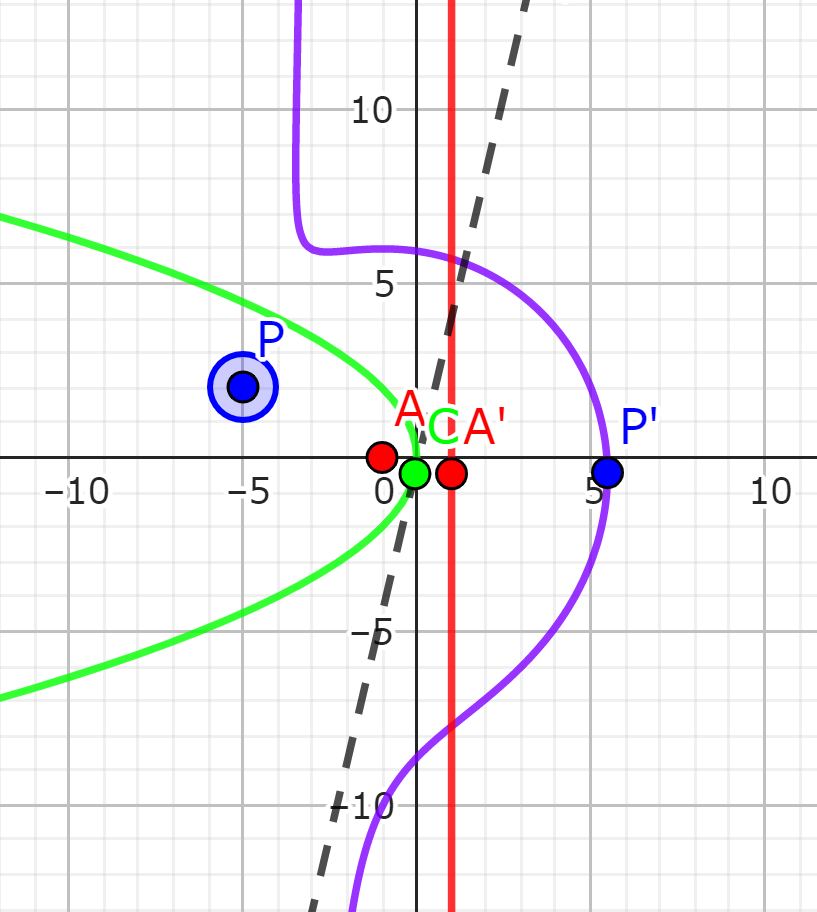}
\end{minipage}
\end{tabular}
\caption{The relationship between $P$ and $\mathcal{G}$ determines the shape of $\mathcal{F}$} 
\label{fig.PGF}
\end{figure}
In addition, we show that if $\mathcal{F}$ has an self-intersection at $P$, then the rotation number of the closed curve $\mathcal{F}_c$ contained in $\mathcal{F}$ around the point $A$ is determined by the relationship between $P$ and $x=1$ (\cref{thm3_9}). 
As a corollary of \cref{thm.main}, we also prove that the shapes of real cubic curves $a_0y^2-a_1xy^2-a_2xy-a_3x^2-a_4x^3=0$ may be classified similarly (\cref{thm4_1}). 

\cref{thm.main} asserts a relationship between Axioms 5 and 6.  
In general, given a point $A$, $Q$, and a line $m$, Axiom 5 allows the fold along a line $l$ such that $l$ goes through $Q$ and maps $A$ onto $m$. 
In the situation of \cref{thm.main}, each fold line $l$ is a tangent line of the parabola $\mathcal{G}$ at $Q(-2r,r^2)$ (\cref{lem.tangent}), so it is equivalent to consider all fold lines $l$ such that $l$ is a tangent line of $\mathcal{G}$ and the folding along $l$ maps $A$ onto the line $x=1$. 
Lang \cite[p.47]{LangRJ2010} considers all lines $l$ satisfying the condition (II) in \cref{thm.construction} and briefly studies the cubic curve that appears as the orbit of points that is the reflections of $A$ in $l$'s.
On the other hand, we cannot find studies that appear if we consider all lines satisfying the condition (I). 
Our study would shed new light on the relationship between Axioms 5 and 6, 
from the viewpoint of Beloch's curve (cf.~\cref{rem.Axioms56}).\\

This paper is organized as follows. 
In \cref{sec.kiso}, we prove \cref{thm.construction}. 

In \cref{sec.orbitofP'}, we prove \cref{thm2_1} asserting that 
the point $P$ is on the left side of (resp. on, or on the right side of) the parabola $\mathcal{G}:4x+y^2=0$ if and only if $\mathcal{F}$ is the disjoint union of the orbit of $P'$ and $\{P\}$ (resp. the orbit of $P'$). 

In \cref{sec.propertyofF}, we further study the curve $\mathcal{F}$ and the orbit of $P'$ and prove \cref{thm.main}. 
In \cref{ss.PandF}, we prove that $P$ is a uniquely existing singular point of $\mathcal{F}$ (\cref{pro3_1}). In addition, we prove that 
$P$ is on the left side of (resp. on, on the right side of) the parabola $\mathcal{G}:4x+y^2=0$ if and only if the orbit of $P'$ does not pass through $P$ (resp. passes through $P$ once, twice), 
by using the circle $\mathcal{C}$ with its center at $P$ and its radius as $AP$. 

In \cref{ss.APA'P'}, we obtain several conditions equivalent to that the segments $AP$ and $A'P'$ intersect (\cref{pro3_5}, \cref{lem3_6}), which play important roles in the proof of \cref{thm.main}. 

In \cref{ss.AandF}, we obtain the following result (\cref{thm3_9}) that is independent of \cref{thm.main}:
we have $p<1$ (resp. $p=1$, $p>1$) if and only if the rotation number of the orbit of $P'$ around $A$ is 0 (resp. undefined, 1). 

In \cref{ss.GandF}, we study the relation between the parabola $\mathcal{G}$ and the orbit of $P'$. 
We prove that the fold line is a tangent line of $\mathcal{G}$ (\cref{lem.tangent}), which is related to Axiom 5, 
and the conditions in \cref{thm3_4} are also equivalent to that the intersection $\mathcal{F}\cap \mathcal{G}$ consists of 0 (resp. 1,2) points \cref{thm3_11}. 

In \cref{ss.shapeofFatP}, we complete the proof of \cref{thm.main}. 
We may translate the relationship between $P$ and $\mathcal{G}$ into the condition of the sign of the Hessian $\mathcal{H}_\mathcal{F}=-4(4p+q^2)$ at $P$. 
As a consequence of Propositions \ref{pro3_5} and \cref{lem3_6}, we obtain that 
the orbit of $P'$ does not pass through $P$ (resp. passes through $P$ just once, more than once) if and only if $P$ is an isolated point (resp. a cusp, a self-intersection point) of $\mathcal{F}$ (\cref{thm3_12}). 
In \cref{ss.surface}, we visualize our situation by considering the surface $z=F(x,y)$ in $\mathbb{R}^3$. 

In \cref{sec.general}, we study a slightly general situation. We replace $A(-1, 0)$ and $P$ with $A(-\frac{\alpha}{2}, 0)$ and the origin $O$ respetively. By using \cref{thm3_12}, we prove that the shapes of cubic curves 
$a_0y^2-a_1xy^2-a_2xy-a_3x^2-a_4x^3=0$
are classified into three classes according to the relationship between the origin and a certain parabola (\cref{thm4_1}).% 

%\noindent \textbf{謝辞.}
\begin{ack}
The author would like to express her sincere gratitude to Kazuhiko Maruki and Isao Nakai for introducing such an excellent topic, 
to Kiyoshi Ohba for letting her know the importance of thinking in her own words in mathematics,
and Jun Ueki for his thorough guidance in making her ideas into an article. 
The author is also grateful to her parents, friends, and all others for their continuous support and help throughout this valuable experience. 
\end{ack}

%本資料を作成するにあたり, 議論に対するアドバイスや本資料の添削していただいた植木潤先生 (お茶の水女子大学, 講師)に深く感謝申し上げます.
%そして, 私の進捗状況を気にかけて下さった大場清先生 (同, 准教授), 
%折り紙について考えるきっかけを下さった中居功先生 (同, 名誉教授), %教授, 
%問いかけを下さった丸木和彦先生 (熊谷女子高等学校)%高校教諭
%をはじめとする多くの先生方, 数学科生の皆さまのおかげで研究をすることが出来ました.
%本当にありがとうございました.

\section{A proof of \cref{thm.construction}} 
\label{sec.kiso}

We prove \cref{thm.construction} for the convenience of readers. 

\begin{proof} 
First, we prove the assertion (1). 
Suppose that a fold $l$ satisfies the conditions (I) and (II) and let $(0,r)$ be the intersection of $l$ and the $y$-axis. Then $l$ is given by \[l:\ x+ry-r^2=0.\]
Let $P'(s,a-c)$ denote the reflection of $P$ in $l$. 
Since the midpoint $(\frac{b+s}{2}, a)$ of the segment $PP'$ is on the line $l$, we have 
\begin{equation} \label{eq1_01}
\frac{b+s}{2}+ra-r^2=0. \end{equation}
Since $l$ and $PP'$ are orthogonal, the vector 
$\ora{PP'}=\spmx{s-b\\(a-c)-(a+c)}=\spmx{s-b\\-2c}$ 
and the normal vector 
$\spmx{1\\r}$ of $l$ are parallel, and hence 
\begin{equation} \label{eq1_02}
\begin{vmatrix}s-b&1\\-2c&r\end{vmatrix}=r(s-b)+2c=0. \end{equation}
By eliminating $s$ in the simultaneous equations Eq.(\ref{eq1_01}) and Eq.(\ref{eq1_02}), we obtain 
\begin{equation} \label{eq1_1}
r^3-ar^2-br+c=0.
\end{equation}
Thus, $r$ is a solution of $x^3-ax^2-bx+c=0$. This completes the proof of (1). 

Next, we prove the assertion (2). 
Suppose that $r\in \mathbb{R}$ is any solution of $r^3-ar^2-br+c=0$ and 
consider the fold line $l$ which $A(-1,0)$ maps onto $A'(1,2r)$. 
Then, this $l$ satisfies the condition (I), and the $y$-intercept of $l$ turns out to be $r$. 
It suffices to show that the reflection $P'(s,t)$ of $P(b, a+c)$ in $l$ satisfies the condition (II), that is, $t=a-c$ holds. 

Since $l$ passes through $(0,r)$ and its normal vector is %$\bigl(\begin{smallmatrix}1\\r\end{smallmatrix}\bigl)$, 
$\spmx{1\\r}$, we have 
\[l:x+r(y-r)=0.\]
Since $l$ is parallel to %$\bigl(\begin{smallmatrix}r\\-1\end{smallmatrix}\bigl)$
$\spmx{r\\-1}$
 and since $PP'$ and $l$ are orthogonal, we obtain 
\begin{equation} \label{eq1_03}
\begin{pmatrix}s-b\\t-(a+c)\end{pmatrix}\cdot \begin{pmatrix}r\\-1\end{pmatrix}=r(s-b)-(t-(a+c))=0.
\end{equation}
In addition, since the midpoint $(\frac{b+s}{2}, \frac{a+c+t}{2})$ the segment $PP'$ is on $l$, we have 
\begin{equation} \label{eq1_04}
\frac{b+s}{2}+\frac{a+c+t}{2}r-r^2=0.
\end{equation}
By eliminating $s$ in the simultaneous equations Eq.(\ref{eq1_03}) and Eq.(\ref{eq1_04}), we obtain 
\begin{equation} \label{eq1_2}
-2r^3+(a+c+t)r^2+2br+(t-(a+c))=0.
\end{equation}
By Eq.(\ref{eq1_2})+Eq.(\ref{eq1_1})$\times 2$, we obtain 
\[(-a+c+t)r^2+(t-a+c)=(t-(a-c))(r^2+1)=0,\]
hence $t=a-c$. 
Thus, if the parameter $r$ continuously moves all $\mathbb{R}$, then Axiom 6 yields all solutions of $(\ast)$. This completes the assertion (2). 
\end{proof} 

\section{The orbit of $P'$ and Beloch's curve $\mathcal{F}$} \label{sec.orbitofP'} 
In order to precisely study the construction of \cref{thm.construction}, we consider all folds satisfying the condition (I) and study the orbit of $P'$'s that are the reflections of $P(b,a+c)$ in the folds. For simplicity, we set $(b,a+c)=(p,q)$. Then, we have the following. 

\begin{thm} \label{thm2_1} 
Consider the curve $\mathcal{F} : F(x,y)=2(q-y)^2-(q+y)(q-y)(p-x)-(p-x)^2(p+x)=0$ and 
the parabola $\mathcal{G}:4x+y^2=0$. 
The relationship between the orbit of $P'$ and $\mathcal{F}$ is as follows. 

$\bullet$ If $P$ is on the left side of $\mathcal{G}$, then $\mathcal{F}$ is the disjoint union of the orbit of $P'$ and $\{P\}$. 

$\bullet$ If $P$ is on $\mathcal{G}$ or on the right side of $\mathcal{G}$, then $\mathcal{F}$ coincides with the orbit of $P'$. 
\end{thm}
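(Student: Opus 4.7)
The plan is to write every fold line satisfying condition (I) explicitly as $l_r : x + ry - r^{2} = 0$ for $r \in \mathbb{R}$ (as in the proof of \cref{thm.construction}), compute the reflection $P'(r)$ of $P(p,q)$ in $l_r$, and verify that the parametrized orbit lies on $\mathcal{F}$ by direct substitution. A short algebraic rewrite is convenient: setting $u := p - x$ and $v := q - y$, so that $q + y = 2q - v$ and $p + x = 2p - u$, the defining polynomial expands to
\[
F(x,y) = u^{3} + uv^{2} + 2v^{2} - 2quv - 2pu^{2}.
\]
The standard reflection formula gives $P'(r) = (p,q) - \tfrac{2(p + qr - r^{2})}{1 + r^{2}}(1, r)$, hence $u = \tfrac{2(p + qr - r^{2})}{1 + r^{2}}$ and $v = ur$. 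Substituting $v = ur$ and $u(1 + r^{2}) = 2(p + qr - r^{2})$ into the displayed form of $F$ collapses it to $u^{2}\bigl[u(1 + r^{2}) + 2(r^{2} - qr - p)\bigr] = u^{2} \cdot 0 = 0$, so the orbit of $P'$ is contained in $\mathcal{F}$.

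Next I would prove the converse inclusion. Take any $(x, y) \in \mathcal{F}$ with $(x, y) \neq P$. If $u = p - x \neq 0$, define $r := v/u$; then dividing $F(x, y) = 0$ by $u^{2}$ recovers exactly $u(1 + r^{2}) = 2(p + qr - r^{2})$, which is the defining relation for $P'(r)$, so $(x, y) = P'(r)$. If instead $u = 0$ but $v \neq 0$, the displayed form of $F$ forces $2v^{2} = 0$, a contradiction. Hence $\mathcal{F} = \{P'(r) : r \in \mathbb{R}\} \cup \{P\}$ as sets.

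Finally, it remains to decide whether the point $P$ itself belongs to the orbit. Since $P'(r) = P$ forces $u = 0$ and $v = 0$, it happens exactly when $p + qr - r^{2} = 0$, i.e., when $r^{2} - qr - p = 0$ admits a real root. The discriminant of this quadratic is $q^{2} + 4p$, and the parabola $\mathcal{G} : 4x + y^{2} = 0$ is precisely the locus $4p + q^{2} = 0$; the region strictly to the left of $\mathcal{G}$ is cut out by $4p + q^{2} < 0$. Therefore, if $P$ lies strictly to the left of $\mathcal{G}$ the quadratic has no real roots, the orbit avoids $P$, and $\mathcal{F}$ is the disjoint union of the orbit with $\{P\}$; while if $P$ lies on or strictly to the right of $\mathcal{G}$ the quadratic has one or two real roots, the orbit passes through $P$, and $\mathcal{F}$ coincides with the orbit. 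The only slightly delicate step is the algebraic bookkeeping showing $F(x, y) = u^{2}\bigl[u(1 + r^{2}) + 2(r^{2} - qr - p)\bigr]$ under the substitution $v = ur$; everything else reduces to the reflection computation and the discriminant of a quadratic.
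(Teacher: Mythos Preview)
Your argument is correct and follows the same three-step outline as the paper (orbit $\subset\mathcal{F}$; $\mathcal{F}\setminus\{P\}\subset$ orbit; decide whether $P$ itself lies on the orbit), but the execution of the last two steps is genuinely different. For the reverse inclusion the paper takes an arbitrary $(s,t)\in\mathcal{F}$, writes down the perpendicular bisector $m$ of $P$ and $(s,t)$, reflects $A(-1,0)$ across $m$, and then manipulates the midpoint and orthogonality equations to force the reflected point onto $x=1$ (or else $s=p$); you instead observe that in the shifted variables $u=p-x$, $v=q-y$ the polynomial factors as $F=u^{2}\bigl[u(1+r^{2})+2(r^{2}-qr-p)\bigr]$ once one sets $r=v/u$, which immediately recovers the parameter value and makes the whole computation a two-line affair. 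Likewise, to decide whether $P$ is on the orbit the paper invokes the focus--directrix description of $\mathcal{G}$ and the geometric condition $AP=A'P$ (foreshadowing the circle $\mathcal{C}$ used later in \S\ref{ss.PandF}), while you read it off directly as the discriminant $q^{2}+4p$ of $r^{2}-qr-p=0$. Your route is shorter and purely algebraic; the paper's route is longer but sets up the geometric machinery (the line $m$, the circle $\mathcal{C}$, the identity $AP=A'P$) that is reused in Propositions~\ref{pro3_2}--\ref{pro3_3}.
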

\begin{proof}
\underline{Proof of (the orbit of $P'$) $\subset$ $\mathcal{F}$}:  
Let $P'(s,t)$ be a point satisfying the condition. 
Since the corresponding fold $l$ is given by $x+ry-r^2=0$ and $PP'$ is perpendicular to $l$, we have 
\begin{equation} \label{eq.P'1}
\begin{pmatrix}s-p\\t-q\end{pmatrix}\cdot \begin{pmatrix}r\\-1\end{pmatrix}=r(s-p)-(t-q)=0. %\notag
\end{equation}
In addition, since the midpoint $(\frac{p+s}{2}, \frac{q+t}{2})$ of the segment $PP'$ is on $l$, we have 
\begin{equation} \label{eq.P'2}
\frac{p+s}{2}+\frac{q+t}{2}r-r^2=0. %\notag
\end{equation}
Now Eq.(\ref{eq.P'1}) yields $r(s-p)=t-q$, and Eq.(\ref{eq.P'2})$\times 2(s-p)^2$ yields 
\[F(s,t)=2(q-t)^2-(q+t)(q-t)(p-s)-(p-s)^2(p+s)=0.\] 

\underline{The condition for (the orbit of $P'$) $\supset$ $\mathcal{F}$}: 
If $(s, t)\in \mathcal{F}$, then we have 
\begin{equation} \label{eqtwothreeone}
F(s,t)=2(q-t)^2-((q+t)(q-t)-(p-s)(p+s))(p-s)=0.
\end{equation}
The perpendicular bisector $m$ of $P$ and $(s,t)$ is given by 
\[ \begin{pmatrix}x\\y\end{pmatrix}=\begin{pmatrix}q-t\\-p+s\end{pmatrix}k+\begin{pmatrix}\frac{p+s}{2}\\\frac{q+t}{2}\end{pmatrix}, \ k\in \mathbb{R},\]
\begin{equation} \label{eqtwothreetwo}
2(p-s)x+2(q-t)y-{(q+t)(q-t)+(p+s)(p-s)}=0.
\end{equation}
Note that $(s, t)\in$ (the orbit of $P'$) is equivalent to that the reflection $A'(u,2r)$ of $A(-1,0)$ in $m$ is on $x=1$. 
Since $m$ passes through the midpoint ($\frac{u-1}{2}, r$) of $AA'$, Eq.(\ref{eqtwothreetwo}) yields that 
\begin{equation}
\label{eq2_3}
(p-s)(u-1)+2r(q-t)-{(q+t)(q-t)+(p+s)(p-s)}=0.
\end{equation}
By Eq.(\ref{eq2_3})$\times (p-s)-$Eq.$(\ref{eqtwothreeone})$, we obtain
\begin{equation} \label{eq2_3_5}
(p-s)^2(u-1)+2(p-s)(q-t)r-2(q-t)^2=0.
\end{equation}
Since $m$ and $AA'$ are orthogonal, we obtain 
\begin{equation} \label{eq2_3_6}
(u+1)(q-t)=2r(p-s).
\end{equation}
By Eq.$(\ref{eq2_3_5})\cdot (u+1)^2$ and Eq.$(\ref{eq2_3_6})$, we obtain 
\begin{align*}
0&=(p-s)^2(u+1)^2(u-1)+4r^2(p-s)^2(u+1)-8r^2(p-s)^2\\
&=(p-s)^2((u+1)^2(u-1)+4r^2(u+1)-8r^2)\\
&=(p-s)^2((u+1)^2(u-1)+4r^2(u-1))\\
&=(p-s)^2(u-1)((u+1)^2+4r^2). 
\end{align*}
Thus we have $u=1$ or $s=p$ holds. 
If $u=1$, then $A'$ is on $x=1$. 
If $s=p$, then by $(s, t)\in \mathcal{F}$, we have $t=q$, $P=P'$, and hence $AP=A'P$. 
Let us consider when there exists $A'$ such that $AP=A'P$. 

Note that the parabola $\mathcal{G}$ is defined by the focus $A(-1,0)$ and the directrix $x=1$. 
Here, Fig.\ref{fig.PGF} in Section 1 helps. 
If $P$ is on the left side of $\mathcal{G}$, that is, if  $4p+q^2 <0$, then we always have $AP<A'P$, so no $A'$ satisfies the condition. 
%there does not exist $A'$ satisfying the condition. 
If otherwise, such $A'$ exists. 

Thus, we have verified that $\mathcal{F}\setminus$(the orbit of $P'$) has at most one element $P$, and we have $P\not\in$(the orbit of $P'$) if and only if $P$ is on the left side of $\mathcal{G}$. This completes the proof. 
\end{proof}

\begin{cor} \label{cor2_2}
The orbit of $P'(s,t)$ $\subset \mathcal{F}$ is parametrized by $r\in \mathbb{R}$ as follows: 
\[\pmx{s\\t}=\frac{1}{r^2+1}\pmx{(2+p)r^2-2qr-p\\ 2r^3-qr^2-2pr+q}.\]
\end{cor}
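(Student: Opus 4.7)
The plan is to solve explicitly, as functions of $r$, the linear system in the unknowns $(s,t)$ that was already extracted in the proof of \cref{thm2_1}. The perpendicularity condition Eq.(\ref{eq.P'1}), namely $r(s-p)-(t-q)=0$, and the midpoint condition Eq.(\ref{eq.P'2}), namely $(p+s)+(q+t)r-2r^2=0$, rearrange into
\[\pmx{r & -1 \\ 1 & r}\pmx{s\\t}=\pmx{rp-q\\ 2r^2-p-qr}.\]

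The coefficient determinant is $r^2+1>0$, so for every $r\in\mathbb{R}$ the system has a unique solution. Cramer's rule then produces $s$ and $t$ as rational functions of $r$ with common denominator $r^2+1$; after expanding and collecting terms by powers of $r$, the numerators simplify to $(p+2)r^2-2qr-p$ and $2r^3-qr^2-2pr+q$, which is exactly the formula asserted in the corollary.

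An alternative route, which avoids invoking the earlier system, is to apply the standard reflection formula to the line $l:x+ry-r^2=0$ with source point $P(p,q)$: using $a^2+b^2=1+r^2$ and $ap+bq+c=p+rq-r^2$ in the reflection identity $(s,t)=(p,q)-\frac{2(ap+bq+c)}{a^2+b^2}(a,b)$ gives the same rational expressions directly.

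The computation is entirely routine and presents no genuine obstacle; the only care needed is in simplification, so that the factor $r^2+1$ is cleanly extracted from each numerator. Finally, since \cref{thm2_1} already shows that every element of the orbit of $P'$ arises from exactly one fold line in our family, parametrized by its $y$-intercept $r\in\mathbb{R}$, this formula does sweep out the full orbit as $r$ ranges over $\mathbb{R}$, completing the corollary.
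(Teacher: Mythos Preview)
Your proof is correct and follows essentially the same approach as the paper: both solve the $2\times 2$ linear system given by Eq.(\ref{eq.P'1}) and Eq.(\ref{eq.P'2}) for $(s,t)$ in terms of $r$. The paper does this by forming the specific combinations Eq.(\ref{eq.P'1})${}-{}$Eq.(\ref{eq.P'2})$\times 2r$ and Eq.(\ref{eq.P'1})$\times r+{}$Eq.(\ref{eq.P'2})$\times 2$, which is exactly Gaussian elimination, while you write the same system in matrix form and apply Cramer's rule; the alternative via the reflection formula that you sketch is also fine but amounts to the same computation.
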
 
\begin{proof}
In the former half of the proof of \cref{thm2_1}, if we take Eq.(\ref{eq.P'1})$-$Eq.(\ref{eq.P'2})$\times 2r$ and Eq.(\ref{eq.P'1})$\times r+$Eq.(\ref{eq.P'2})$\times 2$, then we obtain the assertion. 
\end{proof} 

\section{Properties of Beloch's curve $\mathcal{F}$} 
\label{sec.propertyofF}
In this section, we study properties of the real cubic curve $\mathcal{F}:F(x,y)=0$ 
that is the union of the orbit of $P'$ and $\{P\}$, 
and the orbit of $P'$ that is a curve parametrized by $r\in \mathbb{R}$ in $A'(1,2r)$.

\subsection{Relationships between $P$ and $\mathcal{F}$}
\label{ss.PandF}
\begin{prop} \label{pro3_1}
The point $P(p,q)$ is a uniquely existing singular point of $F(x,y)=0$. 
\end{prop}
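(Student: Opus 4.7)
I would begin by rewriting $F$ in local coordinates centered at $P$. Setting $X := x-p$ and $Y := y-q$, so that $q-y=-Y$ and $p-x=-X$, a direct expansion gives
\[
F(x,y) \;=\; -X^3 - 2pX^2 - XY^2 - 2qXY + 2Y^2.
\]
Since neither constant nor linear terms appear, one reads off at once that $F(P)=F_x(P)=F_y(P)=0$, so $P$ is a singular point of $\mathcal{F}$. This settles existence.

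For uniqueness, the plan is to exploit an Euler-type identity. Writing $F=F_2+F_3$ for the homogeneous decomposition in $(X,Y)$, Euler's formula gives $XF_X+YF_Y=2F_2+3F_3=2F+F_3$. The expansion above exhibits the fortunate factorisation $F_3=-X^3-XY^2=-X(X^2+Y^2)$, so
\[
XF_X+YF_Y \;=\; 2F - X(X^2+Y^2).
\]
Now if $Q=(x_0,y_0)$ is any singular point of $\mathcal{F}$, with centered coordinates $(X_0,Y_0)$, then $F=F_X=F_Y=0$ at $Q$, and the identity forces $X_0(X_0^2+Y_0^2)=0$. Since everything is real, this gives either $X_0=0$ or $(X_0,Y_0)=(0,0)$. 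In the first case $F_Y\big|_{X=0}=4Y$, so $F_Y(Q)=0$ forces $Y_0=0$ as well; either way, $Q=P$.

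The argument is essentially algebraic manipulation, and its only real subtlety is spotting the shape $F_3=-X(X^2+Y^2)$. This is what lets the Euler identity collapse the singular-point system into a real sum of squares, so uniqueness drops out with no case analysis at all. Without this observation one would be pushed into eliminating variables from the full system $F=F_X=F_Y=0$, or alternatively into proving $F$ irreducible and invoking the classical fact that an irreducible plane cubic has at most one singular point; both routes are visibly more cumbersome than the identity above.
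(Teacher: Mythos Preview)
Your argument is correct. The change of variables $X=x-p$, $Y=y-q$ is carried out accurately, yielding $F=-X^{3}-2pX^{2}-XY^{2}-2qXY+2Y^{2}$, and the Euler-type identity $XF_{X}+YF_{Y}=2F-X(X^{2}+Y^{2})$ is valid and does the work you claim: at any singular point the left side and $2F$ vanish, forcing $X(X^{2}+Y^{2})=0$, hence $X=0$ over $\mathbb{R}$, and then $F_{Y}\big|_{X=0}=4Y$ gives $Y=0$.

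The paper's own proof takes a different and considerably less conceptual route: it simply writes down $F_{x}=(q+y)(q-y)+(p-x)(p+3x)$ and $F_{y}=-4(q-y)+2y(p-x)$ and then asserts that the system $F=F_{x}=F_{y}=0$ has the single solution $(x,y)=(p,q)$, citing a Mathematica computation. Your approach is genuinely superior here: by recentring at $P$ you make the existence of the singularity transparent, and by spotting the factorisation $F_{3}=-X(X^{2}+Y^{2})$ you reduce uniqueness to a positivity observation rather than an opaque elimination. The paper's method has the advantage of being mechanical and of recording the explicit forms of $F_{x},F_{y}$ (which are reused later for the Hessian), but it outsources the actual verification; yours is fully self-contained and explains \emph{why} uniqueness holds.
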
 
\begin{proof}
The partial derivatives of $F(x,y)$ are given by 
\begin{gather*}
F_x= (q+y)(q-y)+(p-x)(p+3x),\\
F_y= -4(q-y)+2y(p-x).
\end{gather*}
By solving $F=F_x=F_y=0$, we obtain $(x,y)=(p,q)$. 
We used Mathematica \cite{Mathematica} to calculate these values. 
\end{proof}

\begin{prop} \label{pro3_2} 
Suppose that $P$ is fixed. Then for a fold $l$ satisfying the condition {\rm (I)} in \cref{thm.construction} and points $A'$ and $P'$ determined by $l$, the following are equivalent. 
\begin{enumerate}
\item[{\rm (i)}] Points $P$ and $P'$ coincide. 
\item[{\rm (ii)}] The fold $l$ passes through $P$. 
\item[{\rm (iii)}] $AP=A'P$ holds. 
\item[{\rm (iv)}] The point $A'$ is on the circle $\mathcal{C}$ with its center at $P$ and its radius as $AP$. 
\end{enumerate}
\end{prop}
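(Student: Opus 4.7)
\medskip

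The plan is to prove the four equivalences by a short cycle (i)$\iff$(ii)$\iff$(iii)$\iff$(iv), each step using only the defining property that the fold line $l$ is the perpendicular bisector of both $AA'$ and $PP'$. So the strategy is really to unpack what ``reflection in $l$'' means in each case and translate it back into an elementary condition.

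First I would record the key fact: because $l$ sends $A\mapsto A'$ and $P\mapsto P'$, the line $l$ coincides with the perpendicular bisector of $AA'$ as well as with the perpendicular bisector of $PP'$ (provided $P\neq P'$; if $P=P'$ the second statement is vacuous and instead $P\in l$). From this observation:

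For (i)$\iff$(ii), the reflection in $l$ fixes a point if and only if that point lies on $l$, so $P=P'$ is equivalent to $P\in l$. For (ii)$\iff$(iii), since $l$ is the perpendicular bisector of $AA'$, a point lies on $l$ if and only if it is equidistant from $A$ and $A'$; applying this to $P$ gives $P\in l \iff AP=A'P$. Finally (iii)$\iff$(iv) is immediate from the definition of $\mathcal{C}$: $A'P=AP$ is precisely the statement that $A'$ lies on the circle of center $P$ and radius $AP$.

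There is essentially no obstacle here; the only thing to be slightly careful about is the degenerate case $P=P'$ in the equivalence (ii)$\iff$(iii), where the perpendicular bisector of $PP'$ is not defined, but then (i) holds, $P$ lies on $l$ trivially, and $A'P=AP$ follows from $P$ being on the perpendicular bisector of $AA'$, so the chain of equivalences remains intact. I would therefore present the argument in the cyclic order above, pointing out once at the start that all four conditions are consequences of the single fact that $l$ is the perpendicular bisector of $AA'$ and (when meaningful) of $PP'$.
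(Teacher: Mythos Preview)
Your proposal is correct and follows essentially the same approach as the paper: both argue (i)$\iff$(ii)$\iff$(iii)$\iff$(iv) using that a reflection fixes exactly the points on its axis, that $l$ is the perpendicular bisector of $AA'$, and the definition of $\mathcal{C}$. Your extra remark on the degenerate case is harmless but unnecessary, since the chain of equivalences already handles it.
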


\begin{figure}[ht]
\centering
\includegraphics[width = 100mm]{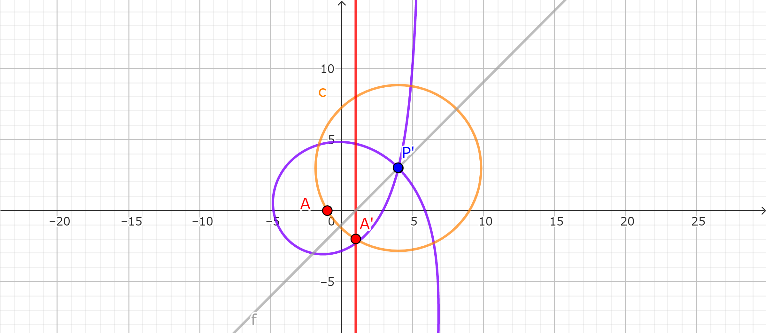}
\caption{A case satisfying the conditions (i) -- (iv) of \cref{pro3_2}}
\end{figure}

\begin{proof}
(i)$\iff$(ii): Since $P'$ is the reflection of $P$ in $l$, $P$ is on $l$ if and only if $P=P'$. 

(ii)$\iff$(iii): Since $l$ is the perpendicular bisector of $AA'$, $P$ is on $l$ if and only if $AP=A'P$. 

(iii)$\iff$(iv): This is obvious by the definition of the circle $\mathcal{C}$. 
\end{proof}

Among these, (i) $\iff$ (iv) enables us to study properties of $P'$ in the language of $A'$. 

\begin{prop} \label{pro3_3} 
The number of intersection points of $x=1$ and the circle $\mathcal{C}$ is determined by the relationship between $P(p,q)$ and the parabola $\mathcal{G}:4x+y^2=0$ as follows. 
\begin{itemize}
\item $x=1$ and $\mathcal{C}$ have no intersection point $\iff$ $4p+q^2<0$, 
\item $x=1$ and $\mathcal{C}$ have exactly one intersection point $\iff$ $4p+q^2=0$, 
\item $x=1$ and $\mathcal{C}$ have two intersection points $\iff$ $4p+q^2>0$. 
\end{itemize}
Note that $\mathcal{G}$ is determined by the focus $A(-1,0)$ and the directrix $x=1$. 
\end{prop}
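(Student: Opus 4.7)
The plan is to reduce the claim to the elementary criterion that a line and a circle meet in $0$, $1$, or $2$ points according as the distance from the center of the circle to the line exceeds, equals, or is less than the radius. Here the center of $\mathcal{C}$ is $P(p,q)$ and the radius equals $AP$ where $A(-1,0)$, so I simply need to compare $|p-1|$, the distance from $P$ to the vertical line $x=1$, with $AP=\sqrt{(p+1)^2+q^2}$.

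The crucial algebraic step is the identity
\[
(p-1)^2-\bigl((p+1)^2+q^2\bigr)=-(4p+q^2),
\]
which I would verify by expanding both squares. The sign of this quantity is exactly the sign appearing in the statement, so
\[
|p-1|\lessgtr AP \iff (p-1)^2\lessgtr (p+1)^2+q^2\iff 4p+q^2\gtrless 0.
\]
Reading off the three cases for $4p+q^2<0$, $=0$, $>0$ then yields respectively zero, one, and two intersection points, completing the proof.

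Conceptually, this identity is just the defining property of the parabola $\mathcal{G}$: since $\mathcal{G}$ has focus $A(-1,0)$ and directrix $x=1$, a point $P(p,q)$ lies strictly inside (on, outside) $\mathcal{G}$ precisely when its distance to the focus $A$ is less than (equal to, greater than) its distance to the directrix $x=1$. Thus the three cases correspond exactly to $P$ lying on the left of, on, or on the right of $\mathcal{G}$, which is how the statement is phrased. I would mention this focus–directrix interpretation to make the link with the parabola $\mathcal{G}$ transparent rather than cosmetic.

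There is no real obstacle here; the argument is a one-line distance comparison. The only thing worth being careful about is to state at the outset that the radius of $\mathcal{C}$ is defined as the Euclidean distance $AP$ (so that we do not lose the square root), and then to square both sides before invoking the sign of $4p+q^2$.
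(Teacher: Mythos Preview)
Your argument is correct and is essentially the same as the paper's: both compare the distance $|p-1|$ from $P$ to the line $x=1$ with the radius $AP$, square, and read off the sign of $4p+q^2$. Your write-up is in fact more explicit than the paper's (which only spells out one case and says the others are similar), and your focus--directrix remark matches the paper's concluding note.
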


\begin{proof} 
If $x=1$ and $\mathcal{C}$ has two intersection points, then the distance between the line $x=1$ and the center $P(p,q)$ is less than the radius $AP$, so we have $|AP|^2>(p-1)^2$ and hence $4p+q^2>0$. This implies that $P$ is on the right of $\mathcal{G}$. 
Other cases may be treated similarly. 
\end{proof}

The curve that is a connected component of $\mathcal{F}$ is the orbit of $P'$ under the condition (I) of \cref{thm.construction} and is parametrized $r\in \mathbb{R}$ in $A'(1,2r)$.

\begin{thm} \label{thm3_4} The orbit of $P'$ parametrized by $r\in \mathbb{R}$ admits the following equivalence. 
\begin{itemize}
\item The orbit of $P'$ does not pass through $P$ $\iff$ $4p+q^2<0$, 
\item The orbit of $P'$ passes through $P$ exactly once $\iff$ $4p+q^2=0$, 
\item The orbit of $P'$ passes through $P$ twice $\iff$ $4p+q^2=0$. 
\end{itemize}
\end{thm}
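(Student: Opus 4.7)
The plan is to derive \cref{thm3_4} as a direct consequence of \cref{pro3_2} and \cref{pro3_3}. Recall that the orbit of $P'$ under condition (I) is parametrized by $r\in\mathbb{R}$, where for each $r$ the fold $l$ has $y$-intercept $r$ and corresponding companion point $A'(1,2r)$. The orbit passes through $P$ at parameter $r$ precisely when $P'(r)=P$; accordingly, the first task is to translate this coincidence into a condition on $A'$, and the second is to count.

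For the translation, I would invoke the equivalence (i)$\iff$(iv) of \cref{pro3_2}, which says that $P'(r)=P$ holds if and only if the point $A'(1,2r)$ lies on the circle $\mathcal{C}$ with center $P$ and radius $AP$. Since $r\mapsto(1,2r)$ is a bijection from $\mathbb{R}$ onto the line $x=1$, the number of values of $r$ for which the orbit meets $P$ equals the cardinality of the intersection $(x=1)\cap\mathcal{C}$. Applying \cref{pro3_3}, which classifies this cardinality as $0$, $1$, or $2$ according to whether $4p+q^2$ is negative, zero, or positive, yields the three equivalences of \cref{thm3_4} in the stated order.

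I do not foresee a serious obstacle: both ingredients are already in hand, so the argument is essentially a concatenation. The only point worth spelling out is that the parametrization $r\mapsto l\mapsto P'$ is injective, so the bijection $r\leftrightarrow A'(1,2r)$ transports the count of orbit passages through $P$ faithfully to the count of intersections with $\mathcal{C}$. This injectivity is immediate from the explicit formula $l:x+ry-r^2=0$ used at the start of the proof of \cref{thm2_1}: distinct values of $r$ give distinct folds $l$, hence distinct reflections $P'$ of $P$ in $l$, so no passage is counted twice and none is missed.
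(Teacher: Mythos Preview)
Your approach is exactly the paper's: the proof there reads in full ``This is clear by Propositions \ref{pro3_2} and \ref{pro3_3}.'' Your second paragraph fleshes this out correctly.

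However, the final paragraph contains a slip. You claim the parametrization $r\mapsto l\mapsto P'$ is injective, arguing that distinct $r$ give distinct folds $l$ and hence distinct reflections $P'$. The first implication is fine, but the second fails precisely in the case of interest: if two distinct folds $l_1,l_2$ both pass through $P$, then both reflections equal $P$. Indeed, the very statement you are proving says the orbit can meet $P$ twice, which is incompatible with injectivity of $r\mapsto P'$. Fortunately this injectivity is not what the argument needs; the bijection $r\leftrightarrow A'(1,2r)$ between $\mathbb{R}$ and the line $x=1$, which you already invoked in the second paragraph, is what transports the count faithfully. Simply drop the last paragraph (or rephrase it to rely only on $r\mapsto A'$) and the proof is clean.
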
 

\begin{proof} 
This is clear by Propositions \ref{pro3_2} and \ref{pro3_3}. 
\end{proof}

\subsection{The intersection of $AP$ and $A'P'$} %Conditions for $AP$ and $A'P'$ to intersect}
\label{ss.APA'P'} 

Here, we prove \cref{pro3_5} and \cref{lem3_6} that will play important roles in the proof of \cref{thm.main}. 

\begin{prop} \label{pro3_5} 
The relationship between $A$ and $\mathcal{C}$ determines how $AP$ and $A'P'$ intersect. 
%The way the segments $AP$ and $A'P'$ intersect determines the relationship between $A$ and $\mathcal{C}$. 
\begin{itemize}
\item $P=P'$ $\iff$ $A'$ is on $\mathcal{C}$. 
\item $AP\cap A'P'\neq \emptyset$ and $P\neq P'$ $\iff$ $A'$ is inside $\mathcal{C}$. 
\item $AP\cap A'P'= \emptyset$ $\iff$ $A'$ is outside $\mathcal{C}$. \end{itemize}
\end{prop}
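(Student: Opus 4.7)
The plan is to deduce the first bullet directly from \cref{pro3_2}, and for the other two to exploit the fact that the fold line $l$ is the common perpendicular bisector of $AA'$ and $PP'$, so that the reflection $\sigma_l$ in $l$ interchanges the segments $AP$ and $A'P'$. The key bridge is the standard perpendicular-bisector dichotomy: a point $Q$ lies on the $A$-side of $l$ (resp.\ on $l$, resp.\ on the $A'$-side) exactly when $|QA|<|QA'|$ (resp.\ $=$, resp.\ $>$).

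For the first bullet, I would observe that \cref{pro3_2} already identifies $P=P'$ with $|AP|=|A'P|$, i.e.\ with $A'\in\mathcal{C}$; when $P=P'$ the two segments share the endpoint $P$, so $AP\cap A'P'\neq\emptyset$ automatically. For the other two bullets, I would apply the dichotomy above to $Q=P$: since the radius of $\mathcal{C}$ is $|AP|$, while the position of $A'$ relative to $\mathcal{C}$ is governed by $|PA'|$, it follows immediately that $A'$ lies strictly outside (resp.\ inside) $\mathcal{C}$ iff $P$ lies strictly on the $A$-side (resp.\ $A'$-side) of $l$.

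It then remains to relate ``$A$ and $P$ on the same/opposite side of $l$'' to ``$AP\cap A'P'$ empty/nonempty.'' Any common point of $AP$ and $A'P'=\sigma_l(AP)$ is either fixed by $\sigma_l$, hence on $l$, or else arises from a collinear overlap, which can occur only if $AP$ itself is perpendicular to $l$. So in the generic case, if $A,P$ are strictly on the same side then $AP$ misses $l$ and therefore misses $A'P'$; if $A,P$ are on opposite sides then $AP$ crosses $l$ at a unique point $X$, and by $\sigma_l$-invariance $X$ lies on $A'P'$ as well. The only real obstacle is the degenerate collinear case in which the line through $A$ and $P$ is perpendicular to $l$: there I would check by hand that, because $|AP|=|A'P'|$ and $AA'\parallel PP'$, the two segments still share a point precisely when $A$ and $P$ lie on opposite sides of $l$, so the dichotomy is preserved. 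Combining these with the previous paragraph yields the second and third bullets.
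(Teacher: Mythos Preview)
Your argument is correct and structurally parallel to the paper's: both reduce the second and third bullets to the dichotomy ``$A$ and $P$ lie on the same/opposite side of the fold line $l$.'' Where you diverge is in the bridge from this dichotomy to ``$A'$ inside/outside $\mathcal{C}$.'' The paper plugs $A=(-1,0)$ and $P=(p,q)$ into the explicit equation $x+ry-r^2=0$ of $l$, obtains the sign condition $p+qr-r^2>0$, and then checks algebraically that this is exactly the inequality $|PA'|<|PA|$ for $A'=(1,2r)$. Your route is fully synthetic: since $l$ is the perpendicular bisector of $AA'$, which half-plane of $l$ contains $P$ is decided by the comparison $|PA|\lessgtr|PA'|$, and that comparison is literally the definition of $A'$ lying outside/inside $\mathcal{C}$. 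Your version is coordinate-free and a bit cleaner; the paper's computation has the side benefit of producing the explicit $r$-interval for which $A'$ lies inside $\mathcal{C}$, which is reused later.

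One remark: your hedge about the ``degenerate collinear case'' is unnecessary. Since $P\neq P'$ and $A\neq A'$, neither $P$ nor $A$ lies on $l$; hence when $A$ and $P$ are strictly on the same side of $l$, the entire segment $AP$ sits in one open half-plane and $A'P'=\sigma_l(AP)$ in the other, so the segments are disjoint regardless of whether the line through $A$ and $P$ happens to be perpendicular to $l$. The half-plane argument already handles every case uniformly.
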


\begin{proof} The case with $P=P'$ is already proved by \cref{pro3_2}. 
Suppose $P\neq P'$. 
Since $l$ is the common perpendicular bisector of $AA'$ and $PP'$, 
Each of $A$ and $A'$, $P$ and $P'$ are on the different side of $l$. 
We obtain the assertion by the following equivalences. 

\begin{tabular}{cl}
$AP\cap A'P'\neq \emptyset$ & $\iff$ $A$ and $P$ are on the different side of $l$\\
& $\iff$ The values of $x+ry-r^2$ at $A$ and $P$ have different signs\\
& $\iff$ $(-1-r^2)(p+qr-r^2) < 0$ \\
& $\iff$ $p+qr-r^2 >0$ \\ 
& $\iff$ $q - \sqrt{q^2+4p} < r < q + \sqrt{q^2+4p}$ \\
& $\iff$ $A'$ is inside of $\mathcal{C}$. \qedhere
\end{tabular}
\end{proof}

\begin{prop} \label{lem3_6} On the $x$-coordinates of $P(p, q)$ and $P'(s, t)$, we have 
\begin{itemize}
\item $P=P'$ $\iff$ $p=s$, 
\item $AP\cap A'P'\neq \emptyset$ and $P\neq P'$ $\iff$ $p>s$, 
\item $AP\cap A'P'= \emptyset$$\iff$ $p<s$. 
\end{itemize} 
\end{prop}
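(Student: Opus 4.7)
The plan is to reduce this statement to the computation that already appeared in the proof of \cref{pro3_5}. Using the parametrization
\[s = \frac{(2+p)r^2 - 2qr - p}{r^2+1}\]
from \cref{cor2_2}, a direct manipulation gives
\[p - s = \frac{2(p + qr - r^2)}{r^2+1}.\]
Since $r^2+1 > 0$, the sign of $p - s$ agrees with the sign of $p + qr - r^2$.

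Now the proof of \cref{pro3_5} (specifically its last chain of equivalences) shows that $p + qr - r^2 > 0$ characterizes exactly the case $AP \cap A'P' \neq \emptyset$ with $P \neq P'$, while $p + qr - r^2 = 0$ characterizes the case where the fold $l$ passes through $P$, which by \cref{pro3_2} is the case $P = P'$. Combined with the proportionality above, this immediately yields the three equivalences in the statement: the cases $p > s$, $p = s$, $p < s$ correspond respectively to $AP \cap A'P' \neq \emptyset$ with $P \neq P'$; to $P = P'$; and to $AP \cap A'P' = \emptyset$.

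The one small point to verify is that $p = s$ really forces $P = P'$, not merely equality of $x$-coordinates. This follows from the orthogonality relation $t - q = r(s - p)$ used in the proof of \cref{thm2_1}, which gives $t = q$ as soon as $s = p$. (Alternatively, one can observe that $F(p, y) = 2(q-y)^2$, so the vertical line $x = p$ meets $\mathcal{F}$ only at $P$.) The main obstacle is therefore the algebraic simplification of $p - s$, which is a short routine manipulation; all the geometric content is already packaged in \cref{pro3_5}.
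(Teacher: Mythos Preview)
Your proof is correct and takes a genuinely different route from the paper's. The paper argues purely geometrically: since $l$ is the common perpendicular bisector of both $AA'$ and $PP'$, the vectors $\ora{AA'}$ and $\ora{PP'}$ are parallel, and the segments $AP$ and $A'P'$ intersect precisely when these two vectors point in opposite directions (so that $A,P,A',P'$ form a crossed quadrilateral rather than an ordinary trapezoid); comparing the $x$-components $2$ and $s-p$ then yields the sign condition on $p-s$ directly, without ever invoking the parametrization. Your approach instead pulls the explicit formula for $s$ from \cref{cor2_2} and reduces everything to the quantity $p+qr-r^{2}$ already isolated in the proof of \cref{pro3_5}.

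What each buys: the paper's argument is self-contained and needs neither \cref{cor2_2} nor the internal computations of \cref{pro3_5}. Your argument, on the other hand, makes the link between \cref{pro3_5} and \cref{lem3_6} completely transparent---they are literally the same sign condition $p+qr-r^{2}\gtrless 0$ read two ways---and it is immune to the orientation slips that a vector-direction argument invites. (Indeed, the paper's printed proof says ``same direction'' and concludes $s>p$, which is the reverse of the inequality in the statement; your computation $p-s=2(p+qr-r^{2})/(r^{2}+1)$ gets the sign right automatically.) Your closing remark that $s=p$ forces $t=q$ via $t-q=r(s-p)$ is a nice touch that the paper leaves implicit.
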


\begin{proof} The first case is obvious. Suppose $P\neq P'$. 
Since $l$ is the common perpendicular bisector of $AA'$ and $PP'$, 
$AA'$ and $PP'$ are parallel, and we have the following equivalence:

\begin{tabular}{cl}
$AP\cap A'P'\neq \emptyset $ &$\iff$ $\ora{AA'}=\spmx{2\\2r}$ and $\ora{PP'}=\spmx{s-p\\ t-q}$ are in the same direction \\
&$\iff$ $s>p$. 
\end{tabular}

\noindent 
This completes the proof. 
\end{proof}

\begin{figure}[h]
\begin{tabular}{cc}
\begin{minipage}[t]{0.5\linewidth}
\centering
\includegraphics[width = 40mm]{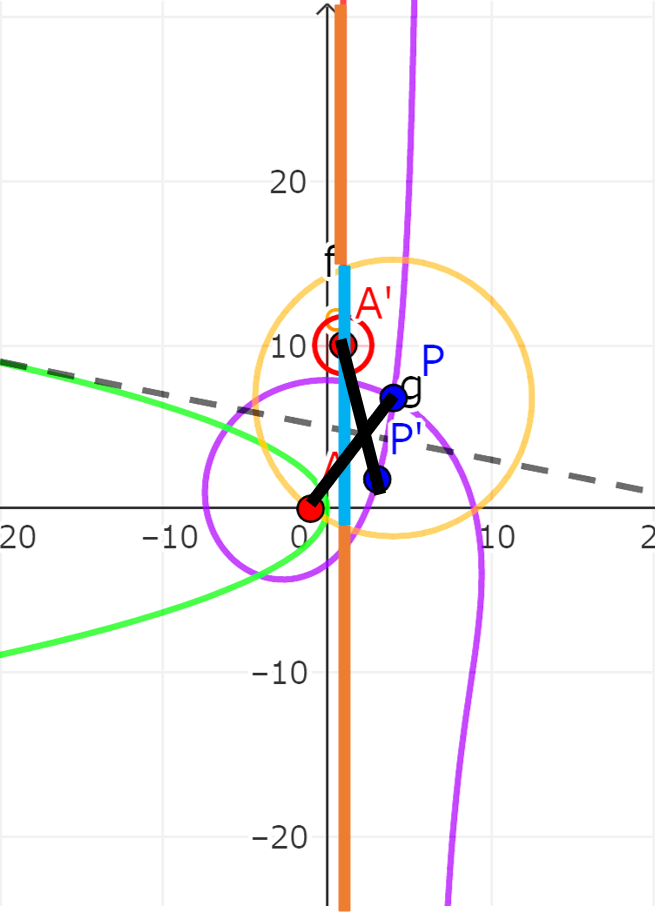}
\end{minipage}
&
\begin{minipage}[t]{0.5\linewidth}
\centering
\includegraphics[width = 40mm]{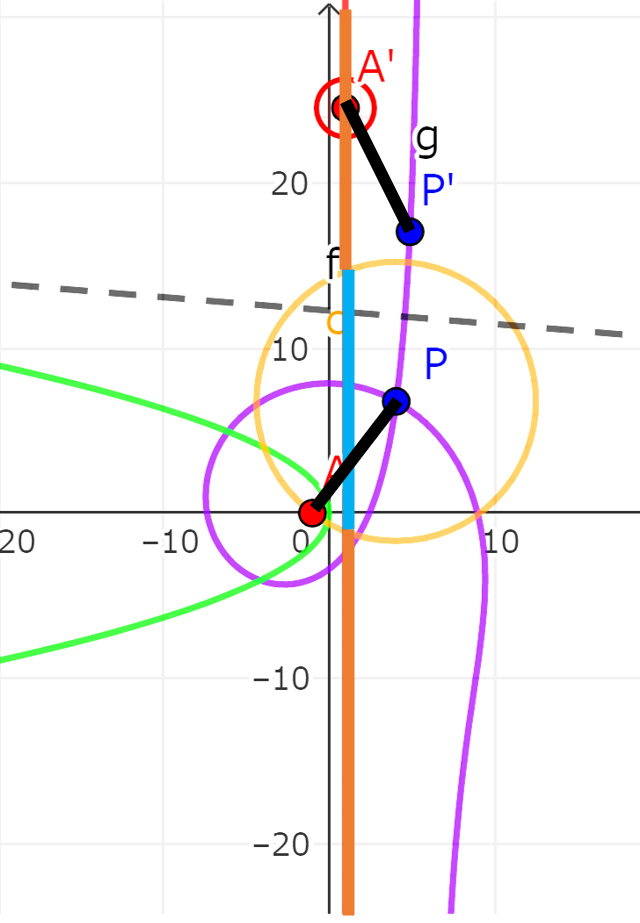}
\end{minipage}
\end{tabular}
\caption{The intersection of $AP$ and  $A'P'$} 
\end{figure}

\subsection{The rotation number of $\mathcal{F}_c$ around $A$}
\label{ss.AandF} 
Assume that $4p+q^2>0$, so that by \cref{thm3_4}, 
the curve $\mathcal{F}$ have a self-intersection at $P(p,q)$. 
Let $\mathcal{F}_c$ denote the closed curve between the self-intersections. 
In this subsection, we study the rotation number of $\mathcal{F}_c$ around $A(-1,0)$. 
Our result here is independent of \cref{thm.main}. 
\begin{dfn} \label{dfn3_8} 
Let $\Gamma$ be a piecewise smooth closed curve in $\mathbb{R}^2$ parametrized by $[0,1]\subset \mathbb{R}$ and let $A$ be a point that is not on $\Gamma$. 
If $P'$ goes along $\Gamma$ according to the parameter, then the rotation number of $\Gamma$ around $A$ stands for that of the image of $\varphi: \Gamma \rightarrow S^1;P' \mapsto \ora{AP'}/|\ora{AP'}|$. 
\end{dfn} 

\begin{lem} \label{lem.cross} 
Let $\Gamma$ be a piecewise smooth closed curve $\Gamma$ in $\mathbb{R}^2$ and 
let $A$ be a point that is not on $\Gamma$. 
If each of the half straight lines with their endpoint at $A$ and are parallel to $x$- or $y$-axis 
intersects with $\Gamma$ exactly once, then the rotation number of $\Gamma$ around $A$ is 1. 
\end{lem}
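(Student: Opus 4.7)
The plan is to place $A$ at the origin by translation and use the four axial intersection points to partition $\Gamma$ into four arcs, each confined to a single open quadrant; then the continuous lift of the argument changes by exactly $\pm\pi/2$ across each arc, giving a total change of $\pm 2\pi$.

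First, let $P_E, P_N, P_W, P_S$ denote the unique intersection points of $\Gamma$ with the eastern, northern, western, and southern half-lines from $A$. These four points partition $\Gamma$ into four closed arcs. Since every axial intersection of $\Gamma$ is accounted for among $P_E, P_N, P_W, P_S$, the interior of each arc meets no axial half-line, and hence lies entirely in one of the four open quadrants $Q_1, Q_2, Q_3, Q_4$ of $\mathbb{R}^2\setminus\{A\}$.

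Second, I would pin down the cyclic order of $P_E, P_N, P_W, P_S$ along $\Gamma$. For an arc to join two of these points while remaining in one open quadrant, both endpoints must lie in the closure of that quadrant. Since $P_E$ and $P_W$ share no quadrant closure, and likewise $P_N$ and $P_S$, the successor of $P_E$ along $\Gamma$ can only be $P_N$ (via an arc in $\overline{Q_1}$) or $P_S$ (via an arc in $\overline{Q_4}$); iterating this shows the cyclic order is either $(P_E, P_N, P_W, P_S)$ or its reverse, corresponding to a counter-clockwise or clockwise traversal.

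Third, I would compute the rotation number via a continuous argument lift $\widetilde{\arg}\circ\varphi$. On the arc contained in the closed quadrant $\overline{Q_j}$, this lift takes values in a single closed interval of length $\pi/2$, of the form $\bigl[2k\pi+(j-1)\pi/2,\ 2k\pi+j\pi/2\bigr]$; the two endpoint values are forced to be the two extremes of the interval, since the endpoint arguments modulo $2\pi$ are already those extremes. Hence the net change of $\widetilde{\arg}$ across each arc is $+\pi/2$ under the first cyclic order and $-\pi/2$ under the reverse. Summing over the four arcs gives total change $\pm 2\pi$, so the rotation number is $\pm 1$; under the orientation for which the statement is asserted, it equals $+1$. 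I do not anticipate a serious obstacle: the entire argument reduces to observing that the hypothesis ``exactly one crossing per axial half-line'' already pins down the combinatorics of $\Gamma$ to be that of a single loop winding once around $A$.
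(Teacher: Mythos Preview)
Your argument is correct. It is a genuinely different route from the paper's, and in one respect it is more careful.

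The paper's proof is homological: it builds a cross-shaped simple closed curve $\mathcal{X}$ out of short arcs of $\Gamma$ near the four axial intersection points $P_i$ together with segments along the axes, and then writes $\Gamma=\mathcal{X}+\Gamma_1+\Gamma_2+\Gamma_3+\Gamma_4$ as $1$-cycles, where each $\Gamma_i$ is a closed piece lying in (the closure of) a single quadrant, hence with rotation number $0$ about $A$; since $\mathcal{X}$ has rotation number $1$, so does $\Gamma$. Your proof instead works directly with a continuous branch of the argument: you partition $\Gamma$ into four arcs, pin each arc to a single closed quadrant, and read off that the lifted argument changes by exactly $\pm\pi/2$ on each arc.

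What your approach buys is twofold. First, it is more elementary---no $1$-cycle arithmetic, just the intermediate-value behaviour of a continuous lift on an interval of length $\pi/2$. Second, you explicitly justify that the cyclic order of $P_E,P_N,P_W,P_S$ along $\Gamma$ must be $(P_E,P_N,P_W,P_S)$ or its reverse; the paper's construction of the $\Gamma_i$ tacitly assumes this order without argument. What the paper's approach buys is a more pictorial decomposition that generalises readily to other cut-and-paste winding-number computations. Both proofs, as you note, determine the rotation number only up to sign because the lemma statement does not fix an orientation on $\Gamma$; your closing remark handles this honestly.
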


\begin{proof} 
Let $P_1, P_2, P_3, P_4$ be the intersections of $\Gamma$ and the half lines so that 
the $x$-coordinate of $P_1$ is greater than that of $A$ and $P_i$'s are numbered anti-clockwise. 
Assume that the suffixes are in $\mathbb{Z}/4\mathbb{Z}$. 
Take a small $\varepsilon >0$ and consider the circle $C_i$ with its center at $P_i$ and radius $\varepsilon$ for $i=1,2,3,4$. 
Suppose that $P'$ goes along $\Gamma$, let $P_i^-$ denote the last intersection of $P'$ and $C_i$ before reaching $P_i$, and let $P_i^+$ denote the first intersection of $P'$ and $C_i$ after starting $P_i$. 
By taking a sufficiently small $\varepsilon >0$, we may assume that 
each part $\Gamma'_i$ of $\Gamma$ connecting $P_i^{-}$ and $P_i^{+}$ does not have a self-intersection point.
By combining $\Gamma'_i$'s and the segments $L_i^{\pm}$'s such that each of them has an endpoint at $P_i^{\pm}$ and parallel to the $x$- or $y$-axises, we may obtain a cross-shaped simple close curve $\mathcal{X}$. 
In addition, for each $i$, let $\Gamma_i$ denote the closed part obtained by combining $L_{i+1}^-$, $L_i^+$, and the part of  $\Gamma$ connecting $P_i^+$ and $P_{i+1}^-$. 
Then, endowed with natural orientations, 
the curves satisfy $\Gamma=\mathcal{X}+\Gamma_1+\Gamma_2+\Gamma_3+\Gamma_4$ as 1-cycles. 
The rotation numbers of $\Gamma_i$'s around $A$ are zero and that of $\mathcal{X}$ is 1, so that of $\Gamma$ is 1. 
\end{proof} 

\begin{figure}[h]
\begin{center}
\includegraphics[width=35mm]{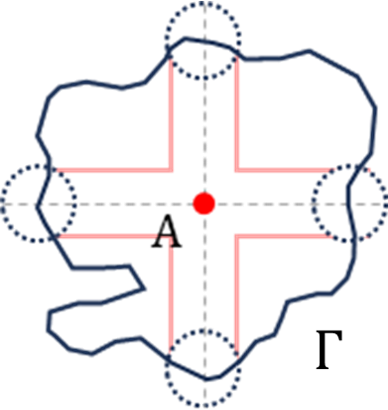}
\end{center} 
\caption{The cross-shaped closed curve in \cref{lem.cross}}
\end{figure}

\begin{lem} 
Suppose $4p+q^2>0$ and $p>1$. 
Then the defining polynomial $F(x,y)$ of the curve $\mathcal{F}$ satisfies the following. 

{\rm (1)} The equation $F(-1,t)=0$ in $t$ has two distinct solutions $\alpha,\beta$ whose signs differ. 

{\rm (2)} The equation $F(s,0)$ has three distinct solutions; One is on $s<-1$, and two are on $s>-1$. If we restrict the parameter $s$ to the part $\mathcal{F}_C$ of $\mathcal{F}$ between the self-intersection points, then $F(s,0)$ has two distinct solutions, one of which is in $s<-1$ and the other in $s>-1$. 
\end{lem}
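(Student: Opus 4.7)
Both parts reduce to direct substitution into
$F(x,y)=2(q-y)^{2}-(q+y)(q-y)(p-x)-(p-x)^{2}(p+x)$
followed by sign analysis under the hypothesis $p>1$; the final clause of (2) will additionally use the rotation number result \cref{thm3_9}.

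For (1), the plan is to expand $F(-1,t)$ as a quadratic in $t$. A short calculation with $p-x=p+1$ and $p+x=p-1$ yields
\[F(-1,t)=(p+3)t^{2}-4qt-(p-1)\bigl(q^{2}+(p+1)^{2}\bigr).\]
Under $p>1$ the leading coefficient is strictly positive and the constant term is strictly negative, so the product of the two roots is negative; hence the real solutions $\alpha,\beta$ are distinct and of opposite signs.

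For the first statement of (2), I would view $F(s,0)$ as a cubic in $s$ with leading coefficient $-1$ and evaluate at the diagnostic points
\[F(-1,0)=-(p-1)\bigl(q^{2}+(p+1)^{2}\bigr)<0,\qquad F(p,0)=2q^{2}.\]
Assuming $q\neq 0$, combined with $F(s,0)\to+\infty$ as $s\to-\infty$ and $F(s,0)\to-\infty$ as $s\to+\infty$, the intermediate value theorem produces one simple root in each of $(-\infty,-1)$, $(-1,p)$, and $(p,+\infty)$; since $p>1$, the latter two intervals both lie in $s>-1$, which is the claimed distribution. The degenerate case $q=0$ is settled by the direct factorization $F(s,0)=-(p-s)^{2}(p+s)$, in which $(p,0)=P$ appears as a double root.

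For the restriction to $\mathcal{F}_{C}$, I would invoke \cref{thm3_9}: since $p>1$, the closed curve $\mathcal{F}_{C}$ has rotation number $1$ around $A(-1,0)$, so $A$ lies in the interior of the Jordan curve $\mathcal{F}_{C}$. Because all three roots of $F(s,0)=0$ are simple and distinct from the unique singular point $P$ (which is off the $x$-axis when $q\neq 0$, by \cref{pro3_1}), the line $y=0$ meets $\mathcal{F}_{C}$ transversally. A line through an interior point of a Jordan curve crosses it in a positive even number of points, with at least one on each side; combined with the total count of three intersections of $\mathcal{F}$ with $y=0$, this forces $|\mathcal{F}_{C}\cap\{y=0\}|=2$, with exactly one point in $s<-1$ and one in $s>-1$, the third root of $F(s,0)=0$ lying on the unbounded branch of $\mathcal{F}$. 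The main obstacle I expect is the Jordan-curve/transversality step: formally justifying that $\mathcal{F}_{C}$ is a Jordan curve enclosing $A$ requires a careful parametrization (via \cref{cor2_2}) together with the fact that $P$ is the only singular point appearing on $\mathcal{F}_{C}$ and is visited only as its pair of endpoints.
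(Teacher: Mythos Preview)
Your arguments for (1) and for the first sentence of (2) match the paper's exactly: same substitution, same sign analysis via the factorization of the constant term, same intermediate-value argument using $F(-1,0)<0$, $F(p,0)=2q^{2}$, and the end behavior of the cubic. (You are in fact slightly more careful than the paper in flagging the degenerate case $q=0$.)

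The genuine problem is in the final clause of (2). You invoke \cref{thm3_9} to conclude that $A$ lies in the interior of the Jordan curve $\mathcal{F}_{C}$, but in the paper this lemma is one of the ``two lemmas above'' that are used to \emph{prove} \cref{thm3_9} (see the first line of that proof). So your argument is circular as written, and the Jordan-curve/transversality discussion, while plausible, cannot stand on its own without an independent reason why $A$ is enclosed.

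The paper's route avoids the circularity by identifying $\mathcal{F}_{C}$ purely in terms of the $x$-coordinate. From \cref{pro3_5} and \cref{lem3_6}, the closed piece $\mathcal{F}_{C}$ is precisely the part of the orbit with $s\le p$, while the unbounded branch is the part with $s\ge p$; the two meet only at the self-intersection $P$. Since the three roots of $F(s,0)=0$ lie in $(-\infty,-1)$, $(-1,p)$, and $(p,\infty)$ respectively, the first two automatically satisfy $s<p$ and hence lie on $\mathcal{F}_{C}$, while the third has $s>p$ and lies on the unbounded branch. That is what the paper is gesturing at with the sentence ``Since $P(p,q)$ is the self-intersection point of $\mathcal{F}$, by $F(p,0)=2q^{2}>0$, \ldots''; the upshot is that you should replace the appeal to \cref{thm3_9} by this $s\le p$ characterization of $\mathcal{F}_{C}$, which is already available from \cref{pro3_5} and \cref{lem3_6}.
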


\begin{proof}
(1) 
It suffices to show that $F(-1, t)=1 + p - p^2 - p^3 + q^2 - p q^2 - 4 q t + 3 t^2 + p t^2$
has $D>0$ and $\alpha\beta<0$.
We have \[\alpha\beta=(\text{constant term})/(\text{leading term})=(1 + p - p^2 - p^3 + q^2 - p q^2)/(3 + p)\]
and its sign coincides with that of $1 + p - p^2 - p^3 + q^2 - p q^2=(1-p)(1+2p+p^2+q^2)$. 
By $1-p<0$ and $1+2p+p^2+q^2>1-2p+p^2=(1-p)^2>0$, we obtain $\alpha \beta<0$. 
By \[D=(4q)^2-4(1 + p - p^2 - p^3 + q^2 - p q^2)(3+p)=(4q)^2-4\alpha\beta(3+p)^2>0,\]
we obtain the assertion. 

(2) 
$F(s, 0)=-p^3 + 2 q^2 - p q^2 + (p^2 + q^2) s + p s^2 - s^3$ is a cubic function in $s$ with its leading term being negative. 
By $-1<p$, we have 
\[F(-1, 0)=(1-p)(1+2p+p^2+q^2)<0, \ \ F(p, 0)=2q^2>0.\]  
Hence, there are one real solution in $s<-1$ and two real solutions in $s>-1$. 

Since $P(p,q)$ is the self-intersection point of $\mathcal{F}$, by $F(p,0)=2q^2>0$, 
if we restrict $s$ to the interval between the self-intersection points, 
then there exists exactly one real solution in each of the two regions. 
\end{proof}

\begin{thm}
\label{thm3_9} 
Suppose $4p+q^2>0$. Then, the relationship between the closed curve $\mathcal{F}_c$ and $A$ is determined by the place of $P$ as follows. 
\begin{itemize}
\item $p>1 \iff$ The rotation number of $\mathcal{F}_c$ around $A$ is 1, 
\item $p < 1 \iff$ The rotation number of $\mathcal{F}_c$ around $A$ is 0, or $A=P$, 
\item $p=1 \iff$ $A$ is on $\mathcal{F}_c$ and $A\neq P$. 
\end{itemize}
\end{thm}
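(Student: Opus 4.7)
The plan is to combine \cref{lem3_6}, whose immediate consequence is the dichotomy
\[
\mathcal{F}_c\subseteq\{x\le p\},\qquad \mathcal{F}_u\subseteq\{x\ge p\}
\]
(with equality only at $P$), together with the preceding lemma on the intersections of $\mathcal{F}$ with $x=-1$ and $y=0$. I would first observe that $4p+q^2>0$ excludes $(p,q)=(-1,0)$, so $A\ne P$, and that $F(-1,0)=(1-p)\bigl((1+p)^2+q^2\bigr)$ vanishes exactly when $p=1$, so $A\in\mathcal{F}$ iff $p=1$. Moreover, by the formula in \cref{cor2_2}, the ratio $(t-q)/(s-p)$ equals $r$ on the open interval $(r_1,r_2)$, so the map $r\mapsto P'(r)$ is injective there, and hence $\mathcal{F}_c$ is a Jordan curve to which the usual inside/outside dichotomy applies.

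The case $p=1$ is then immediate: $A\in\mathcal{F}$, and since $\mathcal{F}_u\subseteq\{x\ge 1\}$ cannot contain $A=(-1,0)$, we get $A\in\mathcal{F}_c$; clearly $A\ne P=(1,q)$.

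For $p>1$, the plan is to apply the cross lemma \cref{lem.cross}. The inclusion $\mathcal{F}_u\subseteq\{x\ge p>1\}$ gives $\mathcal{F}\cap\{x=-1\}\subseteq\mathcal{F}_c$; part~(1) of the preceding lemma describes this intersection as two points $\{(-1,\alpha),(-1,\beta)\}$ with $\alpha\beta<0$, so each vertical half-line from $A$ meets $\mathcal{F}_c$ exactly once. Part~(2) of the same lemma places the two points of $\mathcal{F}_c\cap\{y=0\}$ on opposite sides of $A$, so each horizontal half-line from $A$ also meets $\mathcal{F}_c$ exactly once. The cross lemma then yields rotation number $1$.

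For $p<1$, my plan is to show that $A$ lies in the exterior of $\mathcal{F}_c$, giving rotation number $0$. I split into two subcases. If $p\le -1$, the inclusion $\mathcal{F}_c\subseteq\{x\le -1\}$ makes the rightward horizontal ray from $A$, which lies in $\{x>-1\}$, disjoint from $\mathcal{F}_c$. If $-1<p<1$, then $\mathcal{F}_u\subseteq\{x>-1\}$ gives $\mathcal{F}_c\cap\{x=-1\}=\mathcal{F}\cap\{x=-1\}$, and the quadratic
\[
F(-1,t)=(3+p)t^2-4qt+(1-p)\bigl((1+p)^2+q^2\bigr)
\]
has positive leading and constant coefficients, so the product of its roots is positive, meaning any real roots share a common sign. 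Hence the upward vertical ray from $A$ meets $\mathcal{F}_c$ in $0$ or $2$ points, and in either subcase some ray from $A$ crosses $\mathcal{F}_c$ an even number of times. The Jordan curve theorem then places $A$ in the exterior. The step I expect to require the most care is precisely this last subcase: one must use $-1<p<1$ and $(p,q)\ne(-1,0)$ to ensure that every factor appearing in the constant term of $F(-1,t)$ is strictly positive, and that the rays chosen avoid the singular point $P$.
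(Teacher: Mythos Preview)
Your argument is correct, and for $p\ge 1$ it agrees with the paper's: both invoke the cross lemma (\cref{lem.cross}) together with the preceding intersection lemma for $p>1$, and both handle $p=1$ via $F(-1,0)=(1-p)\bigl((1+p)^2+q^2\bigr)$ together with $\mathcal{F}_u\subseteq\{x\ge p\}$.

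For $p<1$ the two arguments genuinely diverge. The paper argues by contradiction: if the rotation number were nonzero, the map $\varphi:\mathcal{F}_c\to S^1$, $P'\mapsto \ora{AP'}/|\ora{AP'}|$, would be surjective, so some $P'\in\mathcal{F}_c$ would satisfy $\varphi(P')=\ora{PA}/|\ora{PA}|$; the paper then asserts that the corresponding $A'$ lies on the \emph{segment} $AP$, which cannot meet $x=1$ when $p<1$. You instead first show that $\mathcal{F}_c$ is a Jordan curve (via the injectivity of $r\mapsto P'(r)$ on $(r_1,r_2)$, using $(t-q)/(s-p)=r$) and then exhibit a half-line from $A$ that misses $\mathcal{F}_c$: the rightward ray when $p\le -1$, or one of the two vertical rays when $-1<p<1$, since the quadratic $F(-1,t)=(3+p)t^2-4qt+(1-p)\bigl((1+p)^2+q^2\bigr)$ then has positive leading and constant coefficients and hence roots of a common sign (if real at all). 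Your route is more elementary and also more robust: in the paper's argument one only gets directly that $A'$ lies on the \emph{line} through $A$ and $P$ and inside $\mathcal{C}$, i.e.\ on the open segment from $A$ to $2P-A$, whose $x$-range is $(-1,2p+1)$; for $0<p<1$ this interval does contain $1$, so the step ``$A'$ is on the segment $AP$'' needs additional justification there. The Jordan-curve step you add is a useful observation in its own right and makes the $p<1$ case entirely self-contained.
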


\begin{proof} 
If $p>1$, then by the two lemmas above, the rotation number is 1. 

Suppose that $p<1$. If the rotation number is $\neq 0$, then 
the continuous map $\varphi:\mathcal{F}_c\to S^1; ;P'\mapsto \ora{AP'}/|\ora{AP'}|$ is surjective,
so there exists some $P'$ on $\mathcal{F}_c$ such that $\varphi(P')=\ora{PA}/|\ora{PA}|$, and the corresponding $A'$ is on the segment $AP$, contradicting to $p<1$. 
Thus, the rotation number is 0. 

The curve $\mathcal{F}$ passes through $A(-1,0)$ if and only if 
\[F(-1, 0)=-(p-1)((p+1)^2+q^2)=0, \]
that is, $p=1$ or $P(p,q)=A(-1,0)$ holds. 
If $p=1$, then $P\neq A$. This completes the proof. 
\end{proof} 

\begin{figure}[h]
\begin{tabular}{cccc}
\begin{minipage}[t]{0.2\linewidth}
\centering
\includegraphics[width = 35mm]{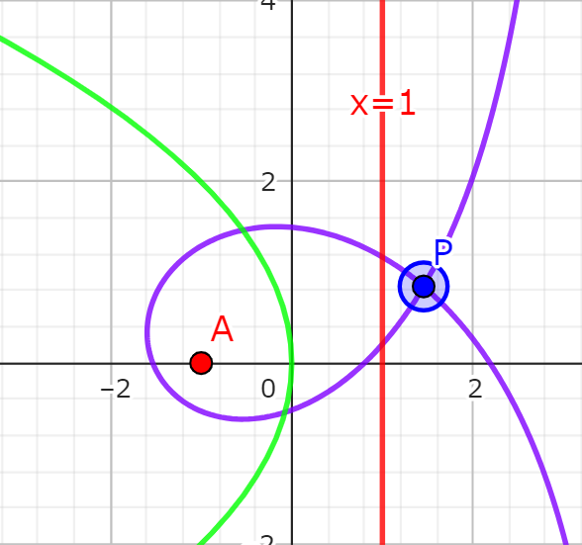}
\end{minipage} \ \ \ &

\begin{minipage}[t]{0.2\linewidth}
\centering
\includegraphics[width = 35mm]{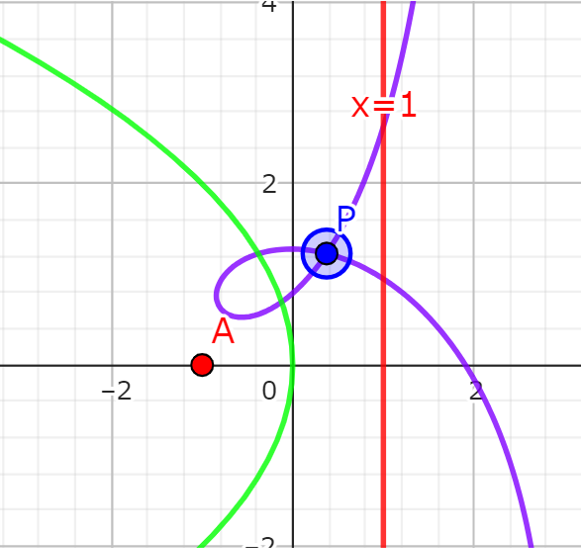}
\end{minipage} \ \ \ &

\begin{minipage}[t]{0.2\linewidth}
\centering
\includegraphics[width = 35mm]{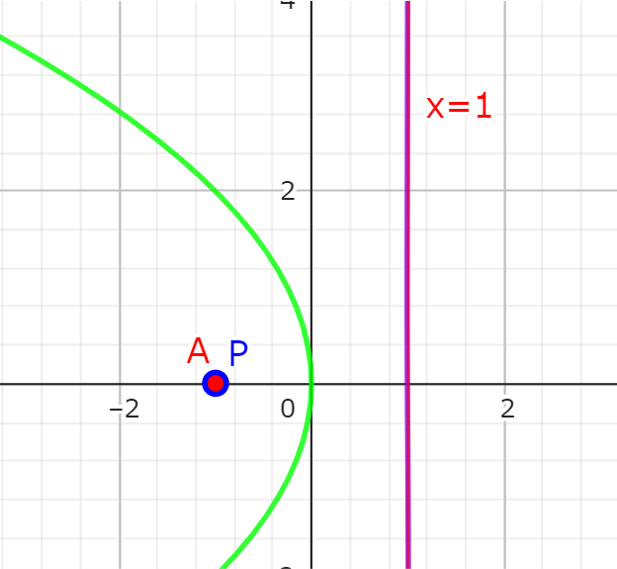}
\end{minipage} \ \ \ &

\begin{minipage}[t]{0.2\linewidth}
\centering
\includegraphics[width = 35mm]{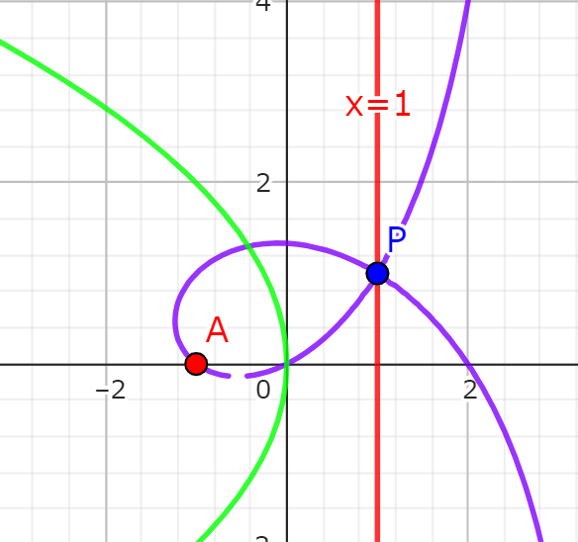}
\end{minipage}
\end{tabular}
\caption{The rotation number of $\mathcal{F}_c$ around $A$ and the place of $P$}
\end{figure}

\subsection{The intersection of $\mathcal{G} : 4x+y^2=0$ and $\mathcal{F}$}
%放物線$\mathcal{G} : 4x+y^2=0$と$\mathcal{F}$の関係} 
\label{ss.GandF}

\begin{lem}
\label{lem.tangent}
For any $r\in \mathbb{R}$, 
the perpendicular bisector of $A(-1, 0)$ and $A'(1, 2r)$ coincides with the tangent line 
of $\mathcal{G}:4x+y^2=0$ at $(-r^2, 2r)$. 
\end{lem}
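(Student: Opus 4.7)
The plan is to verify the statement by computing both lines explicitly and checking that they have the same equation. The calculation is short, so I would present it directly.

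First I would parametrize $\mathcal{G}$ via $(x,y) = (-r^2, 2r)$ and note that the point $Q := (-r^2, 2r)$ lies on $\mathcal{G}$ for every $r \in \mathbb{R}$. Implicitly differentiating $4x + y^2 = 0$ gives $4 + 2y\,y' = 0$, so $y' = -2/y = -1/r$ at $Q$ when $r \neq 0$. Thus the tangent at $Q$ satisfies $y - 2r = -\frac{1}{r}(x + r^2)$, which rearranges to
\[x + ry - r^2 = 0.\]
For $r = 0$, the point is the origin and the tangent is the $y$-axis $x = 0$, which also agrees with the above equation at $r = 0$. So in all cases the tangent line is $x + ry - r^2 = 0$.

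Next I would compute the perpendicular bisector of $A(-1,0)$ and $A'(1, 2r)$. Its midpoint is $(0, r)$, and the direction of $\ora{AA'}$ is $(2, 2r)$, so a normal vector to the bisector is $(1, r)$. Hence the perpendicular bisector has equation
\[1 \cdot (x - 0) + r \cdot (y - r) = 0, \quad \text{i.e.,} \quad x + ry - r^2 = 0,\]
which is exactly the tangent line found above. This proves the claim.

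There is no real obstacle, the identification being a one-line calculation on each side. I would, however, remark that this is nothing but the classical focus–directrix reflection property of the parabola $\mathcal{G}$, whose focus is $A(-1,0)$ and whose directrix is the line $x = 1$: the tangent to $\mathcal{G}$ at any point $Q$ is the perpendicular bisector of the focus $A$ and the foot of perpendicular from $Q$ to the directrix, which here is precisely the point $A'(1, 2r)$. This conceptual viewpoint is what makes \cref{lem.tangent} the natural link to Axiom 5 discussed in the introduction.
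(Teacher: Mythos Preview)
Your proof is correct and takes essentially the same approach as the paper: compute both lines explicitly and observe they are each given by $x+ry-r^2=0$. The only cosmetic difference is that the paper uses the total differential $d(4x+y^2)=4\,dx+2y\,dy$ to write the tangent line directly as $4(x+r^2)+4r(y-2r)=0$, which handles $r=0$ uniformly and spares you the separate case check.
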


\begin{proof} 
The line $l$ is given by $\ora{AA'}\cdot(\spmx{x\\y}-\dfrac{1}{2}(\ora{OA}+\ora{OA'}))=0$, that is, $x+ry-r^2=0$.
By the total differential $d(4x+y^2)=4dx+2ydy$, the tangent line of $\mathcal{G}$ at $(-r^2, 2r)$ is given by $4(x-(-r^2)+2(2r)(y-2r)=0$, that is, $x+ry-r^2=0$. Thus, these two coincide. 
\end{proof}

\begin{figure}[ht]
\centering
\includegraphics[width = 9.0cm]{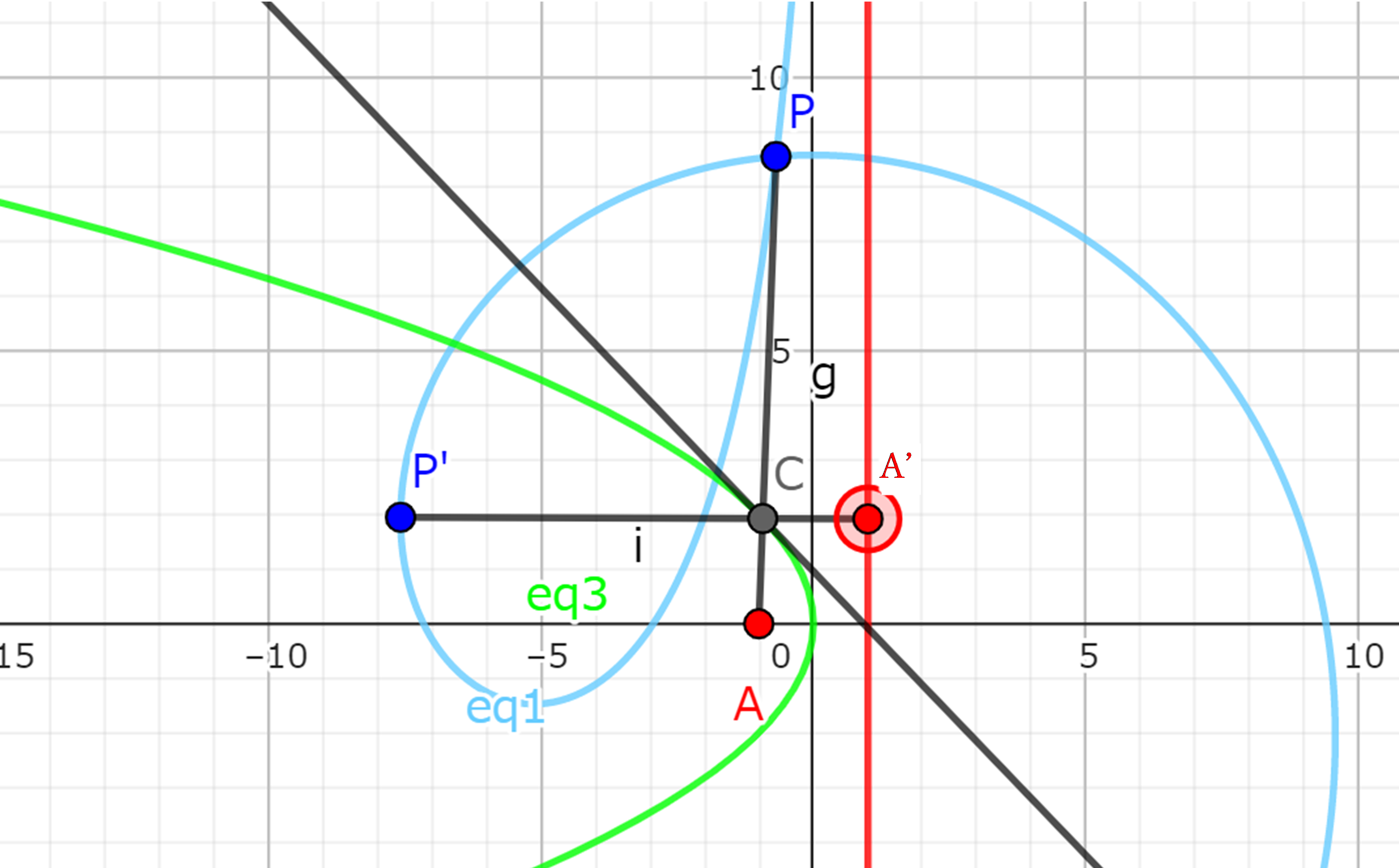}
\caption{The tangent line of $\mathcal{G}$ and $A'$} 
\end{figure}

\begin{thm} \label{thm3_11} 
The relationship between $P$ and $\mathcal{G}$ determines the number of intersection points of $\mathcal{F}$ and $\mathcal{G}$ as follows. 
\begin{itemize}
\item $4p+q^2<0$ $\iff$ $\mathcal{F}\cap \mathcal{G}=\emptyset$. 
\item $4p+q^2=0$ $\iff$  $\mathcal{F}\cap \mathcal{G}$ consists of one element. 
\item $4p+q^2>0$ $\iff$  $\mathcal{F}\cap \mathcal{G}$ consist of more than one element.
\end{itemize} 
\end{thm}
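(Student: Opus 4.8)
The plan is to detect $\mathcal{F}\cap\mathcal{G}$ in two interchangeable ways. First, parametrising $\mathcal{G}$ by the bijection $w\mapsto Q_w:=(-w^2,2w)$, the intersection points correspond exactly to the real roots of $g(w):=F(-w^2,2w)$, whose leading term is $w^6$, so $g(w)\to+\infty$ as $w\to\pm\infty$. Second, writing $\phi(x,y):=4x+y^2$ (so that $\phi<0,=0,>0$ cut out the interior, the curve, and the exterior of $\mathcal{G}$), I will follow the sign of $\phi$ along the orbit $P'_r=(s,t)$ of \cref{cor2_2}; since $t\sim 2r$ while $s\to 2+p$, one has $\phi(P'_r)=4s+t^2\to+\infty$ as $r\to\pm\infty$. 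By \cref{thm2_1}, $\mathcal{F}$ is the union of the orbit and $\{P\}$, and $P\in\mathcal{G}\iff 4p+q^2=0$, so $\mathcal{F}\cap\mathcal{G}$ differs from (orbit)$\cap\mathcal{G}$ only by $P$ in the boundary case. Because the hypotheses $4p+q^2<0,=0,>0$ partition the parameter plane and the conclusions (empty, one point, more than one point) are mutually exclusive and exhaustive, it suffices to prove the three forward implications.

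The engine is a reflection principle based on \cref{lem.tangent}: each fold line $l_r$ is tangent to $\mathcal{G}$, convexity puts $\mathcal{G}$ in the closed half-plane of $l_r$ containing the focus $A$, with every point of $\mathcal{G}$ other than the contact point $Q_r$ lying in the open focus-side half-plane; and reflection in $l_r$ exchanges the two open half-planes. Since $P'_r$ is the reflection of $P$ in $l_r$, this immediately handles two cases. If $4p+q^2<0$ then $P$ is strictly inside $\mathcal{G}$, hence in the open focus-side half-plane of every $l_r$, so every $P'_r$ is strictly outside $\mathcal{G}$; as $P\notin\mathcal{G}$ too, $\mathcal{F}\cap\mathcal{G}=\emptyset$. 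If $4p+q^2=0$ then $P=Q_{q/2}\in\mathcal{G}$ is itself an intersection point; for $r\neq q/2$ the point $P$ is a point of $\mathcal{G}$ distinct from $Q_r$, hence strictly focus-side, so $P'_r$ is strictly outside $\mathcal{G}$, while for $r=q/2$ the fold is the tangent at $P$ and $P'_{q/2}=P$; thus $P$ is the only intersection point.

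The remaining implication $4p+q^2>0$ is the main obstacle, because the reflection principle only places the reflected points in the open focus-side half-plane of a \emph{single} tangent, which need not meet the interior of $\mathcal{G}$; indeed the most natural single witness (reflecting $P$ in the tangent at its nearest point of $\mathcal{G}$) lands inside $\mathcal{G}$ when $P$ is near $\mathcal{G}$ but outside when $P$ is far. I would therefore produce, in either of the two descriptions above, a point where the relevant function is negative, and invoke the intermediate value theorem against its $+\infty$ behaviour at $\pm\infty$ to get at least two zeros, i.e.\ at least two intersection points (distinct, since the orbit meets itself only at the node $P\notin\mathcal{G}$ by \cref{thm3_4}). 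For $p\le 2$ I would use the orbit point at $r=q/2$: \cref{cor2_2} gives $P'_{q/2}=\big(\tfrac{(p-2)q^2-4p}{q^2+4},\tfrac{4q(1-p)}{q^2+4}\big)$ and
\[
\phi(P'_{q/2})=\frac{4\big[(p-2)q^4+(4p^2-8p-4)q^2-16p\big]}{(q^2+4)^2};
\]
the numerator, viewed as a quadratic in $q^2$ with nonpositive leading coefficient and roots $q^2=-4p$ and $q^2=\tfrac{4}{p-2}$, is negative exactly when $q^2$ exceeds the larger root, and for $p<2$ the larger root is $-4p$, so $4p+q^2>0$ forces $\phi(P'_{q/2})<0$ (while for $p=2$ the numerator equals $-4q^2-32<0$ directly). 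For $p>2$ I would instead use the vertex of $\mathcal{G}$: $g(0)=F(0,0)=(2-p)q^2-p^3<0$. Combining the three forward implications with the partition argument of the first paragraph then yields all six equivalences.
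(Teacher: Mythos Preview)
Your proof is correct. The cases $4p+q^2<0$ and $4p+q^2=0$ are handled exactly as in the paper, via the reflection principle built on \cref{lem.tangent}: each fold line is a supporting line of the convex region $\{\phi\le 0\}$, so a point $P$ in that region reflects to the complementary half-plane, which lies entirely in $\{\phi>0\}$.

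For $4p+q^2>0$ your route genuinely differs from the paper's. You split into $p\le 2$ and $p>2$ and in each sub-case exhibit, by explicit computation, a witness with negative sign (respectively $\phi(P'_{q/2})$ and $g(0)=F(0,0)$), then invoke the intermediate value theorem against the $+\infty$ behaviour at the ends. The paper instead produces a single geometric witness valid for all $4p+q^2>0$: it lets $C=(-c^2,2c)$ be the point where the \emph{segment} $AP$ meets $\mathcal{G}$ and sets $r=c$. Since $A,C,P$ are collinear with $C$ between them, so are their reflections $A',C,P'$; but by the optical property of the parabola the line $A'C$ is horizontal (both points have ordinate $2c$), and $P'$ lies on the far side of $C$ from $A'=(1,2c)$, giving $\phi(P'_c)<0$ at once. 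The paper then closes with the loop $\mathcal{F}_c$, which plays the role of your IVT step. Your argument is more computational but more self-contained (it does not lean on the loop structure established earlier), while the paper's avoids the case split and the algebra. One small point of justification: \cref{thm3_4} by itself does not assert that the orbit is injective away from $P$; the clean reason is that if $P'_{r_1}=P'_{r_2}=Q\neq P$ with $r_1\neq r_2$, then the distinct lines $l_{r_1}\neq l_{r_2}$ would both be perpendicular bisectors of $PQ$, which is impossible.
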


\begin{proof}
If $4p+q^2>0$, then $P(p,q)$ is on the right side of $\mathcal{G}$, while $A(-1, 0)$ is on the left side of $\mathcal{G}$, so the segment $AP$ and $\mathcal{G}$ intersect at some point $C(-c^2,2c)$. 
If $A'$ is at $(1,2c)$, then the points $P'$, $C$, $A$ are on a common line, and $A'P'$ is parallel to the $x$-axis. 
Note that $P$ is on the right side of $\mathcal{G}$ and the fold $l$ is the tangent line of $\mathcal{G}$ at $C$. 
Since $P', C, A'$ are placed on a line in this order, we see that $P'$ is on the left side of $\mathcal{G}$. 
Since the curve $\mathcal{F}$ restricted to $q-\sqrt{q^2+4p}\leq r\leq q+\sqrt{q^2+4p}$ is a closed curve, the intersection $\mathcal{F}\cap \mathcal{G}$ must have more than one element. 

If $4p+q^2=0$, then the intersection of the segment $AP$ and $\mathcal{G}$ is $C=P$. 
Hence if $A'$ is at $(1,2q)$, then $P=P'$ and $P'$ is on $\mathcal{G}$.  
If $A'\neq (1,2q)$, then $P'$ is on the right side of $\mathcal{G}$, since the fold line is the tangent line of $\mathcal{G}$. 
Thus, $\mathcal{F}\cap \mathcal{G}$ consists of just one element. 

If $4p+q^2<0$, then both $A$ and $P$ are on the left of $\mathcal{G}$, 
so the segment $AP$ and $\mathcal{G}$ does not intersect. 
In addition, since $P$ is on the left side of $\mathcal{G}$ and the fold line is the tangent line of $\mathcal{G}$, $P'$ must be on the right side of $\mathcal{G}$, 
yielding that $\mathcal{F}\cap \mathcal{G}=\emptyset$. 
\end{proof}

\begin{figure}[ht]
\begin{tabular}{cc}
\begin{minipage}[t]{0.34\linewidth}
\centering
\includegraphics[width = 4cm]{Gex1.png}
\end{minipage}

\begin{minipage}[t]{0.34\linewidth}
\centering
\includegraphics[width = 4cm]{Gex2.png}
\end{minipage}

\begin{minipage}[t]{0.34\linewidth}
\centering
\includegraphics[width = 4cm]{Gex3.png}
\end{minipage}
\end{tabular}
\end{figure}

\begin{figure}[ht]
\begin{tabular}{cc}
\begin{minipage}[t]{0.34\linewidth}
\centering
\includegraphics[width = 4cm]{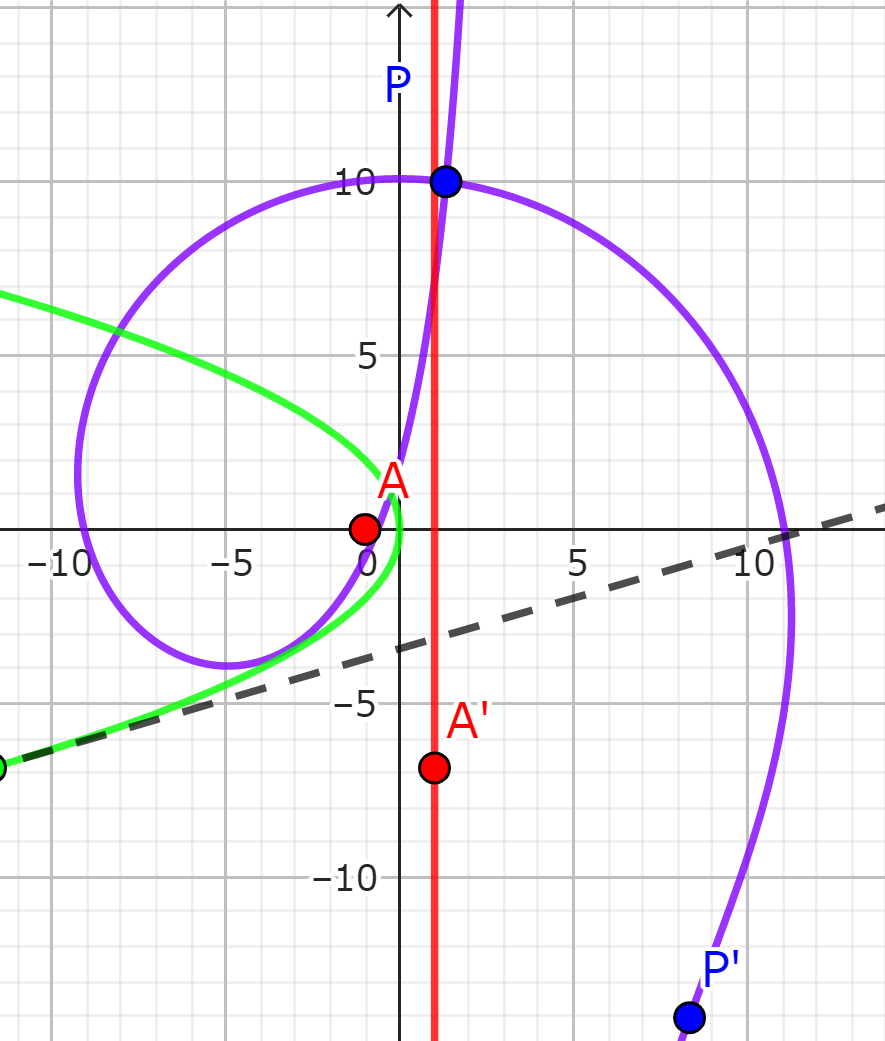}
\end{minipage}

\begin{minipage}[t]{0.34\linewidth}
\centering
\includegraphics[width = 4cm]{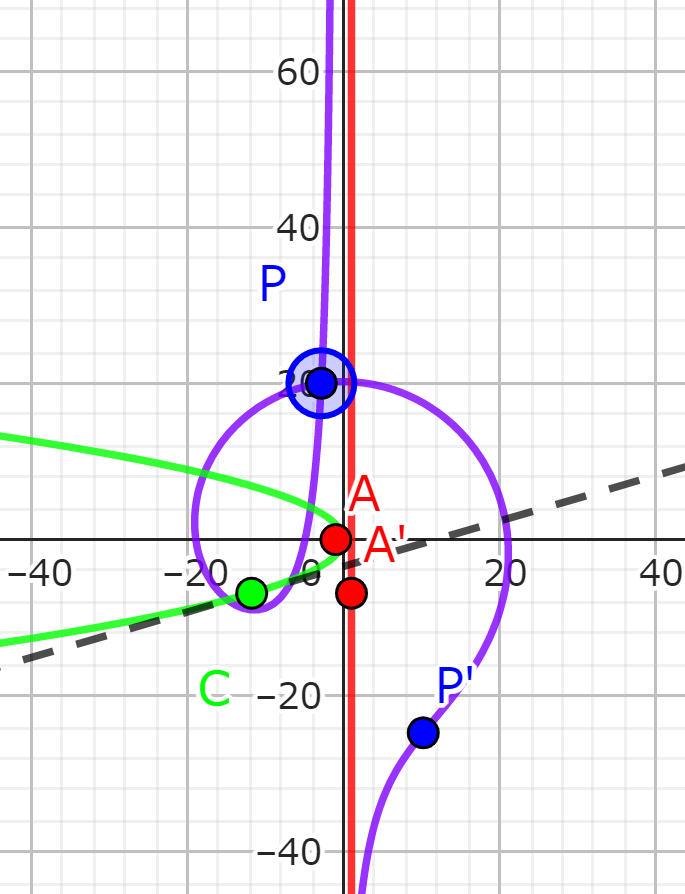}
\end{minipage}
\end{tabular}
\caption{The intersection of $\mathcal{F}$ and $\mathcal{G}$} 
\end{figure}

\subsection{The shape of $\mathcal{F}$ at $P$} 
\label{ss.shapeofFatP} 

\begin{thm} \label{thm3_12} 
The shape of $\mathcal{F}$ at $P$ is determined by the place of $P$ as follows. 
\begin{itemize}
\item $4p+q^2<0 \iff P$ is an isolated point of $\mathcal{F}$. 
\item $4p+q^2=0 \iff P$ is a cusp of $\mathcal{F}$. 
\item $4p+q^2>0 \iff P$ is a self-intersection point of $\mathcal{F}$. 
\end{itemize} 
Here, a cusp stands for a point such that $F_x=F_y=0$ holds there and it is neither an isolated point nor a self-intersection point. 
\end{thm}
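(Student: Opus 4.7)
The plan is to combine the explicit parametrization in \cref{cor2_2} with the trichotomy of \cref{thm3_4}. First I would derive from \cref{cor2_2} the clean factorization
\[\ora{PP'}=\pmx{s-p\\t-q}=\frac{2f(r)}{r^2+1}\pmx{1\\r},\quad f(r):=r^2-qr-p.\]
The discriminant of $f$ is exactly $q^2+4p$, so the real zeros of $f$ are precisely the parameter values at which $P'=P$, matching \cref{thm3_4}. This single identity controls all three cases of the theorem.

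For $4p+q^2<0$, $f$ has no real zero and is bounded below by the positive number $-(q^2+4p)/4$. Then $|PP'|^2=4f(r)^2/(r^2+1)$ is a positive continuous function of $r$ diverging to $+\infty$ as $|r|\to\infty$, so it has a positive minimum. Thus no orbit point is arbitrarily close to $P$; since $P\in\mathcal{F}$ by \cref{pro3_1} and $\mathcal{F}$ is the disjoint union of the orbit and $\{P\}$ by \cref{thm2_1}, $P$ is an isolated point of $\mathcal{F}$.

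For $4p+q^2>0$, $f$ has two distinct real roots $r_1\neq r_2$, and the orbit hits $P$ at exactly these two parameters by \cref{thm3_4}. Differentiating the factorization at $r_i$ gives an orbit tangent vector proportional to $(1,r_i)$, because $f'(r_i)=2r_i-q=r_i-r_j\neq 0$ for $\{i,j\}=\{1,2\}$. Since $(1,r_1)$ and $(1,r_2)$ are linearly independent, the two branches cross transversely at $P$, making $P$ a self-intersection of $\mathcal{F}$. An alternative geometric route, closer to the hint preceding the theorem, uses \cref{pro3_5} and \cref{lem3_6}: as $r$ crosses each $r_i$, the sign of $s-p$ flips, so the two branches approach $P$ from opposite sides of the vertical line $x=p$, directly exhibiting two distinct branches.

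For $4p+q^2=0$, $P$ is singular by \cref{pro3_1} and the orbit passes through $P$ exactly once by \cref{thm3_4}, so $P$ is neither isolated nor a self-intersection, and by the working definition given in the theorem $P$ is a cusp. The main obstacle is obtaining the factorization in the first paragraph: once the factor $(1,r)$ is separated cleanly from \cref{cor2_2}, both the lower bound on $|PP'|$ and the tangent computation at the two roots become essentially one-liners, and the three cases reduce to elementary real-variable bookkeeping together with the previously established propositions.
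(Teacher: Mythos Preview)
Your proof is correct and takes a genuinely different route from the paper's. The paper computes the Hessian $\mathcal{H}_F(p,q)=-4(4p+q^2)$ and invokes the standard second-derivative test for surfaces: a positive Hessian forces a local extremum of $z=F(x,y)$ at $(p,q,0)$ and hence an isolated point of $\mathcal F$, while a negative Hessian gives a saddle and hence a transverse crossing; the borderline case is then handled geometrically via \cref{pro3_5} and \cref{lem3_6}, which force $s\geq p$ and thus a one-sided approach to $P$. Your argument instead extracts from \cref{cor2_2} the single factorization $P'-P=\dfrac{2f(r)}{r^2+1}(1,r)$ with $f(r)=r^2-qr-p$, and reads everything off its discriminant $q^2+4p$: a positive minimum of $|PP'|^2=4f(r)^2/(r^2+1)$ in the negative-discriminant case, and two branch tangents $(1,r_1)$, $(1,r_2)$ with $r_1\neq r_2$ in the positive-discriminant case. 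This is more elementary and more explicit (it actually names the two tangent directions at the node), while the paper's Hessian computation ties the result directly to the quantity $\mathcal H_{\mathcal F}=-4(4p+q^2)$ featured in \cref{thm.main}. One small point worth making explicit in your cusp case: at the double root $r_0=q/2$ the velocity of the parametrization vanishes, so ``the orbit passes through $P$'' does not by itself rule out isolation; your factorization, however, gives $P'\neq P$ for every $r\neq r_0$ immediately, which is all you need.
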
 
\begin{proof}
By \cref{pro3_1}, $P$ is a singular point of $F(x,y)=0$. 
The Hessian of $F(x,y)$ at $P$ is given by 
\begin{gather*}
\mathcal{H}_F=\begin{vmatrix}F_{xx}& F_{xy} \\ F_{yx}& F_{yy}\end{vmatrix}
=\begin{vmatrix}2p-6x& -2y \\ -2y & 4+2p-2x \end{vmatrix}
=(2p-6x)(4+2p-2x)-4y^2,\\ 
\mathcal{H}_{\mathcal{F}}(p, q)=-4(4p+q^2).
\end{gather*}
By elementary calculus for two variable functions, $z=F(x,y)$ takes local maximum/minimum value at $P(p,q)$ if $\mathcal{H}_{\mathcal{F}}(p, q)>0$, while it takes saddle point at $P(p,q)$ if $\mathcal{H}_{\mathcal{F}}(p, q)<0$. 
Since $\mathcal{F}$ is the intersection of $z=F(x,y)$ and $z=0$, 
the condition $4p+q^2<0$ implies that $P$ is an isolated point, 
while $4p+q^2>0$ implies that $P$ is a self-intersection point. 
Suppose $4p+q^2=0$ and note that $\mathcal{C}$ and $x=0$ have only one intersecting point. Then, \cref{pro3_5} yields that $AP$ and $A'P'$ must intersect at $P$, 
and \cref{lem3_6} yields that $P'(s,t)$ satisfies $p\leq s$. 
By that facts that the orbit of $P'$ is connected, $p\leq s$, and $P$ is a singular point of $\mathcal{F}$ (\cref{pro3_1}), we see that $P$ is a cusp of $\mathcal{F}$. 
\end{proof}

\begin{proof}[Proof of {\rm \textbf{\cref{thm.main}.}}] 
All above complete the proof of \cref{thm.main}. 
\end{proof}

\begin{rem} \label{rem.Axioms56}
As we mentioned in the Introduction, 
Axion 5 asserts that if points $A,Q$ and a line $m$ are given, then we may find a fold $l$ that passes through $Q$ and that maps $A$ onto $m$, 
and it is equivalent to that we may construct a tangent line of a given parabola satisfying a certain condition. 
By \cref{lem.tangent}, we can regard \cref{thm.main} as that we consider all tangent lines of the parabola $\mathcal{G}$ that passes through $Q$ and that maps $A$ onto $x=1$. 

On the other hand, Axiom 6 is equivalent to that we may construct common tangent lines of given two parabolas. 
Note that $\mathcal{G}:4x+y^2=0$ is determined by its focus $A(-1,0)$ and the directrix $x=1$.
Let $\mathcal{G'}$ denote the parabola determined by its focus $P(p,q)$ and the directrix $y=a-c$. 
Then the fold line $l$ is a common tangent line of $\mathcal{G}$ and $\mathcal{G'}$. 

Our study in this paper raises a viewpoint to compare these two Axioms. 
%It would be interesting instead to focus on 
If we take $\mathcal{G}'$ into account as well, then we may find a further perspective. 
\end{rem} 

\begin{rem} 
We considered all folds satisfying the condition (I) in \cref{thm.construction} and determined Beloch's curve $\mathcal{F}:F(x,y)=0$ by the orbit of $P'$ that is the reflection of $P$. 
If we instead consider all folds satisfying (II), then the orbit of $A'$ which is the reflection of $A$ 
also defines a real cubic curve and admits a similar study (cf.~\cite[p.47]{LangRJ2010}). 
\end{rem}

\subsection{The surface $z=F(x,y)$} \label{ss.surface}
If we regard $\mathcal{F}$ as the intersection of $z=F(x,y)=2(q-y)^2-(q+y)(q-y)(p-x)-(p-x)^2(p+x)$ and $z=0$, then we may capture the situation in a 3-dimensional view (Fig.\ref{fig.3D}).

\begin{figure}[ht] 
\begin{tabular}{ccc}
\begin{minipage}[t]{0.34\linewidth}
\centering
\includegraphics[width = 60mm]{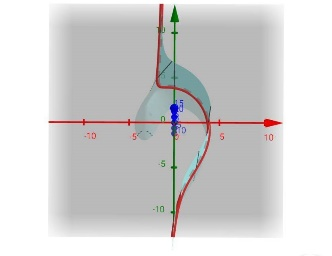}
\end{minipage} \hspace{-10mm}
&
\begin{minipage}[t]{0.34\linewidth}
\centering
\includegraphics[width = 60mm]{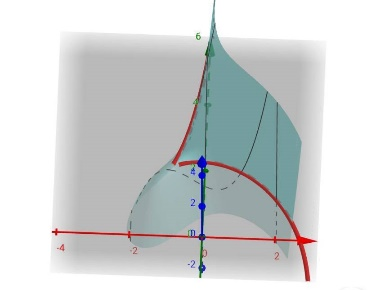}
\end{minipage} \hspace{-10mm}
&
\begin{minipage}[t]{0.34\linewidth}
\centering
\includegraphics[width = 60mm]{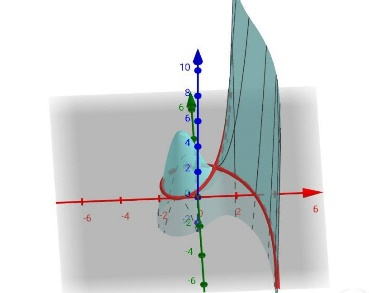}
\end{minipage}
\end{tabular} 
\caption{The surface $z=F(x,y)$ and the cure $\mathcal{F}$}\label{fig.3D}
\end{figure}

If $\mathcal{H}_F(p,q)=-4(4p+q^2)>0$, then by $F_{xx}(p,q)=-4p>0$, we see that $z=F(x,y)$ takes a local maximum value at $P$. 
We may expect the following. 

\begin{conj} 
The shape of $z=F(x,y)$ is determined by the place of $P(p,q)$ as follows. 

\begin{tabular}{cl}
$\bullet$ $4p+q^2<0 \iff$ & $z=F(x,y)$ has a unique local maximum pint at $(p,q,0)$\\
&and a unique saddle point.\\
$\bullet$ $4p+q^2=0 \iff$ & $z=F(x,y)$ has no local maximum/minimum point \\
&and a unique saddle point at $(p,q,0)$. \\
$\bullet$ $4p+q^2>0 \iff$ & $z=F(x,y)$ has a unique local maximum point \\
& and a unique saddle point $(p,q,0)$.
\end{tabular} 
\end{conj}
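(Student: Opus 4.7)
The plan is a direct Morse-theoretic analysis of $F$ and a comparison of the resulting critical point structure with the three cases predicted by the sign of $4p+q^2$. First I would translate coordinates by $X:=x-p$, $Y:=y-q$ so as to move $P$ to the origin. Using the factorization $F(x,y)=2(y-q)^2-(x-p)(x^2+y^2-p^2-q^2)$ one reads off the clean normal form
\[\tilde F(X,Y):=F(p+X,q+Y)=-X^3-XY^2-2pX^2-2qXY+2Y^2.\]
The vanishing of the linear and constant parts reconfirms that $P$ is a critical point, and the quadratic part reproduces the Hessian $\left(\begin{smallmatrix}-4p&-2q\\-2q&4\end{smallmatrix}\right)$ with determinant $\mathcal{H}_F(p,q)=-4(4p+q^2)$ as in \cref{thm3_12}. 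Since $4p+q^2<0$ forces $p<0$, one has $\tilde F_{XX}(P)=-4p>0$, and the standard second-derivative test classifies $P$ as a local extremum (a minimum, correcting the sign in the remark preceding the conjecture) when $4p+q^2<0$ and as a saddle when $4p+q^2>0$. In the degenerate case $4p+q^2=0$ the Hessian has rank one, and restricting $\tilde F$ to the kernel direction $(2,q)$ gives $\tilde F(2t,qt)=-2(4+q^2)t^3$, which changes sign at $t=0$; hence $P$ is a non-extremum critical point, as the middle case of the conjecture asserts.

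Next I would enumerate any further critical points. Writing $\tilde F_Y=-2[X(Y+q)-2Y]$, the system $\tilde F_X=\tilde F_Y=0$ splits into the branch $Y=0$, which via $\tilde F_X=0$ yields either $P$ itself or the candidate $X=-4p/3$, and the branch $X=2Y/(Y+q)$. After clearing denominators in $\tilde F_X=0$, this second branch reduces to the auxiliary cubic
\[v^3+qv^2+(8p+12)v-12q=0,\qquad v:=Y+q\ne 0.\]
I would count its real roots via its discriminant $\Delta(p,q)$, pull each root back to $(X,Y)$, and for every resulting critical point compute the Hessian to decide whether it is a saddle or an extremum.

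The main obstacle is this last count. Already the slice $q=0$ shows the subtlety: the auxiliary cubic becomes $v(v^2+8p+12)$, which contributes two additional real critical points precisely when $p<-3/2$, so that for instance at $(p,q)=(-2,0)$ the surface $z=F(x,y)$ has a local minimum at $P$, a further local maximum at $(2/3,0)$, and two saddles at $(0,\pm 2)$---four critical points in total, in conflict with the strict form of the conjecture. I therefore expect the proof to confirm the conjecture in a neighbourhood of the parabola $\mathcal{G}:4p+q^2=0$, where the Hessian at $P$ already controls the picture, while a globally valid statement will require a refinement---for instance phrasing the count as an index sum via a suitable compactification and using the leading cubic form $-x(x^2+y^2)$ of $F$ to analyze the behaviour at the line at infinity. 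The qualitative trichotomy for $P$ itself, however, follows from Step~1 alone.
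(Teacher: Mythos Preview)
The paper offers no proof here: the statement is labelled \emph{Conjecture} and is left open. Your analysis therefore goes well beyond what the paper contains, and in fact your counterexample shows the conjecture is \emph{false} as stated.

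Your computations are correct throughout. The normal form $\tilde F(X,Y)=-X^3-XY^2-2pX^2-2qXY+2Y^2$ is right, and your sign correction to the remark preceding the conjecture is justified: when $\mathcal H_F(p,q)=-4(4p+q^2)>0$ and $F_{xx}(p,q)=-4p>0$, the second-derivative test gives a local \emph{minimum} at $P$, not a maximum as the paper writes. Your treatment of the degenerate case $4p+q^2=0$ via restriction of $\tilde F$ to the kernel direction $(2,q)$ is also sound and shows $P$ is a non-extremum critical point there.

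Most importantly, your example $(p,q)=(-2,0)$ genuinely refutes the first bullet of the conjecture. One checks directly that $\tilde F_Y=2Y(2-X)$ and $\tilde F_X=-3X^2-Y^2+8X$ vanish simultaneously at $(X,Y)=(0,0),(8/3,0),(2,\pm 2)$, i.e.\ at $(x,y)=(-2,0),(2/3,0),(0,\pm2)$ in the original coordinates, and the Hessians classify these as a local minimum, a local maximum, and two saddles respectively. Since $4p+q^2=-8<0$, this contradicts the asserted ``unique local maximum at $(p,q,0)$ and unique saddle point.'' Your diagnosis---that the trichotomy for $P$ itself is proved by the Hessian, while the global critical-point count requires at minimum restricting $(p,q)$ to a neighbourhood of $\mathcal G$ or reformulating as an index statement---is the right conclusion. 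The paper's conjecture should be amended accordingly.
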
 
%\begin{itemize}
%\item $4p+q^2<0 \iff$ $z=F(x,y)$ has a unique local maximum pint at $(p,q,0)$ and a unique saddle point.
%曲面は1つの極大点と1つの峠点を持ち, 極大点は$(p,q,0)$. 
%\item $4p+q^2=0 \iff$ $z=F(x,y)$ has no local maximum/minimum point and has a unique saddle point at $(p,q,0)$. 
%曲面は極値点を持たず, 1つの峠点$(p,q,0)$を持つ. 
%\item $4p+q^2>0 \iff$ $z=F(x,y)$ has one local maximum point and one saddle point $(p,q,0)$. 
%曲面は1つの極大点と1つの峠点を持ち, 峠点は$(p,q,0)$. 
%\end{itemize} 

\section{Real cubic curve $a_0y^2-a_1xy^2-a_2xy-a_3x^2-a_4x^3=0$}
%実代数曲線 $a_0y^2-a_1 xy^2-a_3x^2-a_4x^3=0$の概形} 
\label{sec.general}
Let $\alpha\in \mathbb{R}$ and let $P(p,q)$ be any point. If we consider all folds such that $A_{P}(-\frac{\alpha}{2}, 0)$ map onto the line $x=\frac{\alpha}{2}$, then the union of the orbit of the reflections $P'$ of $P$ about the folds and the set $\{P\}$ of a possibly isolated point is given by 
\[ \mathcal{F}_{P}:\alpha(q-y)^2-(q-y)(q+y)(p-x)-(p-x)^2(p+x)=0. \]
We have considered the case with $\alpha=2$ in the preceding section. 
Since the situations are in a scaling relation, the previous argument shows that 
the shape of this curve is determined by the relationship between the parabola $\mathcal{G}_{P}:4x+y^2=0$ and $P$,
and that $P$ is the uniquely existing singular point of this curve $\mathcal{F}_{P}$. 

By the parallel movement by $\ora{OP}$, we see that 
if we consider all folds that map $A_{O}(-\frac{\alpha}{2}-p, -q)$ onto the line $x=\frac{\alpha}{2}-p$, then the union of the orbit of the reflections of $O$ about the folds and the set $\{O\}$ of a possibly isolated point 
is given by 
\[ \mathcal{F}_{O}:\alpha y^2-xy^2-2qxy-2px^2-x^3=0. \]
The shape of this curve $\mathcal{F}_{O}$ is determined by the relationship between the parallel $\mathcal{G}_{O}:4(x+p)+(y+q)^2=0$ and $O$, and $O$ is a uniquely existing singular point of $\mathcal{F}_{O}$. 
Based on these observations, we may obtain the following. 

\begin{thm} \label{thm4_1} 
The real cubic curve 
\[ \widetilde{\mathcal{F}}:\ a_0y^2-a_1xy^2-a_2xy-a_3x^2-a_4x^3=0\ \ (a_i \in \mathbb{R}-\{0\}).\] 
has a unique singular point at $O$ and its shape is determined by the relationship between $O$ and the value of $G_O(0, 0)=\frac{2a_3}{\sqrt{a_1a_4}}+(\frac{a_2}{2a_1})^2$ as follows. 
\begin{itemize}
\item $G_O(0, 0)<0\iff$ $O$ is an isolated point of $\widetilde{\mathcal{F}}$. 
\item $G_O(0, 0)=0 \iff$ $O$ is a cusp of $\widetilde{\mathcal{F}}$. 
\item $G_O(0, 0)>0 \iff$ $O$ is a self-intersection point of  $\widetilde{\mathcal{F}}$. 
\end{itemize} 
\end{thm}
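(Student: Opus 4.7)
The plan is to reduce $\widetilde{\mathcal{F}}$ to a curve of the form $\mathcal{F}_O$ (introduced just before the statement) by a diagonal scaling of coordinates and then invoke the classification of $\mathcal{F}_O$ at $O$ obtained from \cref{thm3_12} via the parallel translation $\ora{PO}$. The key observation is that a linear isomorphism of $\mathbb{R}^2$ fixing the origin, together with multiplication of the defining polynomial by a nonzero constant, preserves both the local analytic type at $O$ (isolated point, cusp, or self-intersection) and the number of singular points; so it suffices to transfer the classification of the scaled curve back to $\widetilde{\mathcal{F}}$.

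First I would substitute $(x,y) = (\lambda X, \mu Y)$ into the defining polynomial of $\widetilde{\mathcal{F}}$ and compare the five resulting coefficients with those of $\alpha Y^2 - XY^2 - 2qXY - 2pX^2 - X^3 = 0$. Matching the coefficients of $XY^2$ and $X^3$ forces $\mu^2 = (a_4/a_1)\lambda^2$, which is solvable over $\mathbb{R}$ precisely when $a_1 a_4 > 0$ — an implicit standing hypothesis, since $\sqrt{a_1a_4}$ already appears in the statement. Taking $\lambda = \sqrt{a_1/a_4}$ so that $\mu = 1$, and dividing through by the common coefficient $a_4\lambda^3$, I would read off
\[ p = \frac{a_3}{2\sqrt{a_1 a_4}}, \qquad q = \frac{a_2}{2 a_1}, \]
and observe that $4p + q^2$ is exactly the quantity $G_O(0,0)$ in the statement.

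Next I would apply the translation argument sketched immediately before the theorem: the scaled $\mathcal{F}_O$ is the image of the curve $\mathcal{F}_P$ of \cref{sec.propertyofF} under the parallel translation $\ora{PO}$, which carries the unique singular point $P$ of $\mathcal{F}_P$ (\cref{pro3_1}) to the origin, and carries the parabola $\mathcal{G}:4x+y^2=0$ to $\mathcal{G}_O : 4(x+p)+(y+q)^2=0$, whose defining polynomial evaluated at $O$ is $4p+q^2$. Substituting the sign of $G_O(0,0)$ for the sign of $4p+q^2$ in the three equivalences of \cref{thm3_12} then yields the three equivalences of the present theorem. Uniqueness of the singular point of $\widetilde{\mathcal{F}}$ at $O$ is inherited from uniqueness of the singular point of $\mathcal{F}_P$ at $P$ through the same scaling-and-translation bijection.

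The main obstacle I anticipate is bookkeeping around the scaling: handling the signs of $\sqrt{a_1/a_4}$ and $\sqrt{a_1a_4}$ cleanly, and checking that the scaled curve genuinely lands in the $\alpha=2$ situation used by \cref{thm3_12} — or, if not, supplying one further multiplicative rescaling in the spirit of the introductory paragraphs of \cref{sec.general} so that the classifier takes the claimed form $4p+q^2$ rather than $2\alpha p + q^2$. Once those adjustments are made, the equivalence chain is essentially a matter of unwinding the definitions.
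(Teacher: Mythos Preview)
Your proposal is essentially identical to the paper's proof: the paper multiplies the defining equation by $\beta=\sqrt{a_4/a_1^{3}}$ and recognizes $a_1\beta x$ as the new variable (which is exactly your choice $\lambda=\sqrt{a_1/a_4}$, $\mu=1$), reads off $\alpha=a_0\beta$, $p=\dfrac{a_3}{2a_1^{2}\beta}=\dfrac{a_3}{2\sqrt{a_1a_4}}$, $q=\dfrac{a_2}{2a_1}$, and then appeals to the general-$\alpha$ discussion in the opening paragraphs of \cref{sec.general} to finish. The ``$\alpha\neq 2$'' obstacle you anticipate is precisely what those paragraphs dispose of via the scaling relation, so your plan and the paper's argument coincide.
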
 

\begin{proof}
If we multiply the formula of $\widetilde{\mathcal{F}}$ by $\beta=\sqrt{\frac{a_4}{{a_{1}}^3}}$, 
then we obtain 
\begin{gather*}
a_0\beta y^2-a_1\beta xy^2-a_2\beta xy-a_3\beta x^2-a_4\beta x^3=0, \\
a_0\beta y^2-(a_1\beta x)y^2-\frac{a_2}{a_1}(a_1\beta x)y-\frac{a_3}{{a_1}^2\beta }(a_1\beta x)^2-(a_1\beta x)^3=0.
\end{gather*}
Comparing this with $\mathcal{F}_{O}:\alpha y^2-xy^2-2qxy-2px^2-x^3=0$, 
we find 
\[\alpha=a_0\beta=a_0\sqrt{\frac{a_4}{{a_1}^3}}, \ p=\frac{a_3}{2{a_1}^2\beta}, \ q=\frac{a_2}{2a_1}.\]
Hence the preceding argument completes the proof. 
\end{proof}

\begin{eg} (1) If we put $(a_0,a_1,a_2,a_3,a_4)=(-b,-1,-a,0,-1)$, 
then $\widetilde{\mathcal{F}}$ becomes the standard form  $x(x^2+y^2)+(ax-by)y=0$ of so-called \emph{the ophiuride}. %\footnote{オニヒトデ} の標準形 $x(x^2+y^2)+(ax-by)y=0$ となる. 

(2) If we put $(a_0,a_1,a_2,a_3,a_4,a_5)=(2a,-1,0,0,-1)$, 
then $\widetilde{\mathcal{F}}$ becomes the standard form $x(x^2+y^2)-2ay^2=0$ of so-called \emph{the cissoid of Diocles}.

%（オニヒトデとアイビーの挿絵を入れる？ ）
%曲線$\widetilde{\mathcal{F}}$は
%the cissoid of Diocles の標準形 $x(x^2+y^2)-2ay^2=0$ となる. 
%\footnote{植物のIvyをギリシャ語でKissosという. その葉の形に似ていることからギリシャの幾何学者ディオクレスはkissoidと名付けた. そこから転じて英語圏ではcissoidと呼ばれており, 音訳から日本語では疾走線とも呼ばれる.} 
\end{eg} 

\bibliographystyle{alpha}
\bibliography{origami202312arXiv.bib}

\ \\

\end{document}